\documentclass[submission]{eptcs}

\usepackage[british]{babel}
\usepackage[utf8]{inputenc}
\usepackage[T1]{fontenc}


\hypersetup{colorlinks, urlcolor=RoyalBlue, linkcolor=Red!80, citecolor=Green} 
\usepackage[hyphenbreaks]{breakurl}

\usepackage{graphicx} 
\usepackage[dvipsnames]{xcolor} 

  \usepackage{tikz} 
  \usetikzlibrary{
    cd, 
    petri, 
    backgrounds, 
    arrows, 
    positioning, 
    decorations.markings, 
    decorations.pathmorphing,
    calc,  
    fit, 
    shapes.multipart
  }

\tikzset{curve/.style={settings={#1},to path={(\tikztostart)
    .. controls ($(\tikztostart)!\pv{pos}!(\tikztotarget)!\pv{height}!270:(\tikztotarget)$)
    and ($(\tikztostart)!1-\pv{pos}!(\tikztotarget)!\pv{height}!270:(\tikztotarget)$)
    .. (\tikztotarget)\tikztonodes}},
    settings/.code={\tikzset{quiver/.cd,#1}
        \def\pv##1{\pgfkeysvalueof{/tikz/quiver/##1}}},
    quiver/.cd,pos/.initial=0.35,height/.initial=0}

\tikzset{tail reversed/.code={\pgfsetarrowsstart{tikzcd to}}}
\tikzset{2tail/.code={\pgfsetarrowsstart{Implies[reversed]}}}
\tikzset{2tail reversed/.code={\pgfsetarrowsstart{Implies}}}
\tikzset{no body/.style={/tikz/dash pattern=on 0 off 1mm}}

\usepackage{mathtools} 
\usepackage{amssymb} 
\usepackage{dsfont}
\usepackage{mathrsfs}

\usepackage{amsthm} 
\usepackage{stmaryrd} 

\usepackage{relsize} 
\usepackage{microtype} 
\usepackage{multicol} 
\usepackage{csquotes} 
\usepackage{xspace}
\usepackage{enumitem}
\usepackage{fontawesome} 

\usepackage{tabularx, cellspace} 
\setlength\cellspacetoplimit{3pt}
\setlength\cellspacebottomlimit{3pt}
\usepackage[capitalize]{cleveref}

\usepackage{datetime}

%
%
\newcounter{theoremUnified} 
\numberwithin{theoremUnified}{section} 
\numberwithin{theoremUnified}{section} 

%
\newtheoremstyle{plainStyle} 
	{2mm} 
	{2mm} 
	{} 
	{} 
	{\bfseries} 
	{.} 
	{.5em} 
	{} 

\newtheoremstyle{italicStyle} 
	{2mm} 
	{2mm} 
	{\itshape} 
	{} 
	{\bfseries} 
	{.} 
	{.5em} 
	{} 

\theoremstyle{plainStyle}
	{\rmfamily}{\rmfamily}
	\newtheorem{definition}{Definition}{\rmfamily}{\rmfamily}
	\newtheorem{remark}{Remark}{\rmfamily}{\rmfamily}
	\newtheorem{example}{Example}{\rmfamily}{\rmfamily}
	{\rmfamily}{\rmfamily}

\theoremstyle{italicStyle}
	\newtheorem{proposition}{Proposition}{\rmfamily}{\rmfamily}
	{\rmfamily}{\rmfamily}
	\newtheorem{corollary}{Corollary}{\rmfamily}{\rmfamily}

\relpenalty=10000
\binoppenalty=10000
\setlength\parindent{1em}




\newcommand\eqdef{\coloneqq}
\newcommand\nbd{\nobreakdash-\hspace{0pt}}
\newcommand\idd[1]{\mathrm{id}_{#1}}

\newcommand\invrs[1]{#1^{-1}}

\newcommand\incl{\hookrightarrow}

\newcommand\restr[2]{{#1}{\raisebox{0pt}{$|_{#2}$}}}
\newcommand\powerset[1]{\mathscr{P}{#1}}

\newcommand\slice[2]{{#1}/{\raisebox{-2pt}{$#2$}}}
\newcommand\laxslice[2]{{#1}\nnearrow\!{\raisebox{-2pt}{$#2$}}}
\newcommand\pointed[1]{#1_{\bullet}}

\newcommand\indic[1]{\chi_{#1}}

\newcommand\opp[1]{{#1}^\mathrm{op}}

\newcommand\lcat[1]{\mathbf{#1}}
\newcommand\cat[1]{#1}

\newcommand\fun[1]{\mathsf{#1}}

\newcommand\homset[3]{\mathrm{Hom}_{#1}(#2, #3)}
 
\newcommand\wkarr{{\vec{\cat{I}}}}
\newcommand\catpos{\lcat{Pos}}
\newcommand\catset{\lcat{Set}}
\newcommand\catcat{\lcat{Cat}}
\newcommand\lcatcat{\mathscr{C}\!\textit{at}}

\newcommand\catrel{\lcat{Rel}}
\newcommand\catgrph{\lcat{Graph}}
\newcommand\catopengrph{\lcat{OpenGraph}}

\newcommand\pastext[1]{\lcat{Past}(#1)}
\newcommand\extfun[2]{\fun{Ext}^{#1}_{#2}}

\newcommand\posref[1]{\| #1 \|}
\newcommand\pararr[2]{\mathrm{Par}(\slice{#1}{#2})}

\newcommand\dhom[3]{\pi_{#1}(\slice{#2}{#3})}

\definecolor{myblue}{RGB}{92,92,214}
\definecolor{myred}{RGB}{214,92,92}
\definecolor{mymagenta}{RGB}{214,92,214}
\definecolor{mygreen}{RGB}{36,143,36}
\newcommand\blue[1]{\textcolor{myblue}{#1}}
\newcommand\red[1]{\textcolor{myred}{#1}}
\newcommand\magenta[1]{\textcolor{mymagenta}{#1}}


%

%
\newcommand{\Cp}{\fatsemi} 



%

\newcommand{\CategoryC}{\mathit{C}}
\newcommand{\CategoryD}{\mathit{D}}

\newcommand{\WTerm}{\mathds{1}}
\newcommand{\Term}{\mathbf{1}}

%
\newcommand{\Set}{\mathbf{Set}} 

%
\newcommand{\TensorUnit}{I} 

%

\setcounter{tocdepth}{2}
\begin{document}
    \title{Obstructions to Compositionality}

    \author{        
        Caterina Puca
            \institute{Quantinuum\textsuperscript{*}}
            \email{caterina.puca@quantinuum.com}
        \and
        Amar Hadzihasanovic
            \institute{$^1$ Quantinuum\textsuperscript{*} \\$^2$ Tallinn University of Technology}
            \email{amar.hadzihasanovic@quantinuum.com}
        \and
        Fabrizio Genovese
            \institute{20squares}
            \email{noreply@20squares.xyz}
        \and
        Bob Coecke
            \institute{Quantinuum\footnote{17 Beaumont Street, Oxford OX1 2NA, United Kingdom}}
            \email{bob.coecke@quantinuum.com}
    }

    \def\titlerunning{Obstructions to Compositionality}
    \def\authorrunning{Puca, Hadzihasanovic, Genovese, Coecke}

    \maketitle
    \begin{abstract}

Compositionality is at the heart of computer science and several other areas of applied category theory such as computational linguistics, categorical quantum mechanics, interpretable AI, dynamical systems, compositional game theory, and Petri nets. 
However, the meaning of the term seems to vary across the many different applications.
This work contributes to understanding, and in particular qualifying, different kinds of compositionality.

Formally, we introduce invariants of categories that we call zeroth and first homotopy posets, generalising in a precise sense the $\pi_0$ and $\pi_1$ of a groupoid.
These posets can be used to obtain a qualitative description of how far an object is from being terminal and a morphism is from being iso.
In the context of applied category theory, this formal machinery gives us a way to qualitatively describe the ``failures of compositionality'', seen as failures of certain (op)lax functors to be strong, by classifying obstructions to the (op)laxators being isomorphisms. 

Failure of compositionality, for example for the interpretation of a categorical syntax in a semantic universe, can both be a bad thing and a good thing, which we illustrate by respective examples in graph theory and quantum theory.

    \end{abstract}
    \paragraph*{\bf Acknowledgements}
        A.H.\ was supported by the ESF funded Estonian IT Academy research measure (project 2014-2020.4.05.19-0001) and by the Estonian Research Council grant PSG764.
        We thank Sean Tull and Robin Lorenz for helpful comments on an earlier draft.

    \section*{Introduction}\label{sec: introduction}
\emph{Compositionality} is probably the most relevant principle in applied category theory (ACT) research.
While there is no unified definition \cite{Werning_Hinzen_Machery_2012, ghani2018compositional, coecke2021compositionality}, it refers, broadly speaking, to certain forms of relation between properties, behaviours, or observations of a composite system on one hand, and those of its components on the other.
A common concern, in this context, is whether it is possible to derive properties of the whole from properties of its parts, and vice versa.
In some cases, both directions are viable and inverse to each other, in which case a property is ``fully compositional''.
More frequently, only one direction is viable.

The need to formally quantify and/or qualify compositionality has been widely discussed in the ACT community at least since 2018 \cite{Genovese2018mod}, as researchers became increasingly aware of various ``failures of compositionality'', and wished to classify them beyond a simple yes-or-no statement.

Let us be more precise.
Much research in ACT has been devoted to the study of \emph{open systems}, that is, entities with open interfaces that can be composed with other entities of the same kind.
This approach has been pervasive, and has been applied in the study of \emph{categorical quantum mechanics} \cite{abramsky2009categorical}, \emph{natural language} \cite{coecke2021mathematics},  \emph{dynamical systems} \cite{Fong_Spivak_2019}, \emph{Petri nets} \cite{baez2021categories}, \emph{game theory} \cite{ghani2018compositional} and many other subjects.
When studying open systems, it is not rare to define functors mapping a ``theory of boxes''  --- in the form of a monoidal category or bicategory --- where the composition rules of the systems are defined, to a certain ``semantic universe'' of properties or behaviours of the systems.
The properties of these functors reflect how well the information that they capture adheres to the composition rules: a \emph{lax} functor $\fun{P}$, with structural \emph{laxator} morphisms in the direction $\fun{P}f \Cp \fun{P}g \to \fun{P}(f \Cp g)$, means that one can derive information on the whole system from information on its components; an \emph{oplax} functor, with structural morphisms in the direction $\fun{P}(f \Cp g) \to \fun{P}f \Cp \fun{P}g$, means that one can derive information on the components from information on the whole; while a \emph{strong} functor means that the information on components and the information on the whole completely determine each other.

For example, the functor sending \emph{open graphs} to their \emph{reachability relation} (see \autoref{subsec: open graphs}) is lax, which tells us that the reachability relation of a composition of open graphs can be strictly bigger than the composition of the reachability relations defined on its parts.
This is considered undesirable from a computational viewpoint, as it means that one cannot reconstruct the reachability of a graph by separately computing the reachability of its components.

On the other hand, in ``Schr\"odinger compositionality'' (covered in \autoref{subsec: schrodinger compositionality}), quantum-mechanical behaviour arises from the laxity of the functor mapping each object to its set of states.
This laxity implies that not all quantum states are separable, which is desirable, as it unlocks the use of \emph{entanglement} as a resource unavailable in classical mechanics.

In both cases, laxity represents a ``failure of compositionality'' which has both practical and foundational importance: the ``gap'' between a lax and a strong functor represents the gap between what we can compute compositionally with a ``divide-and-conquer'' strategy and what we cannot, or the gap between a classical and non-classical theory of processes.
In this light, the question: \emph{how can we qualify (failures of) compositionality?} becomes the question: \emph{how far is a lax functor from being strong?}\footnote{We will focus on lax functors in our discussion, but everything can be dualised to oplax functors.}
In this paper, we attempt to give a structured answer to the question.
Our chain of reasoning is the following.
\begin{definition}
    A lax functor is strong when all the components of its laxators are isomorphisms.
\end{definition}
Thus, we can think of reducing our question to the more general one: \emph{how far is a morphism from being an isomorphism?}\footnote{This approach, and the fact that it could be investigated with homotopical methods, was first suggested to us by Jules Hedges.}
Let us use the following, well-known characterisation of isomorphisms.
\begin{proposition}\label{prop: iso if terminal in slice}
    A morphism $f\colon X \to Y$ in a category $\CategoryC$ is an isomorphism if and only if it is terminal as an object of the slice category $\slice{\CategoryC}{Y}$.
\end{proposition}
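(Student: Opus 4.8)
The plan is to unwind both directions of the biconditional into elementary statements about morphisms of $\CategoryC$ and match them up. Recall that an object of $\slice{\CategoryC}{Y}$ is a morphism with codomain $Y$, and that a morphism in $\slice{\CategoryC}{Y}$ from $g\colon A \to Y$ to $h\colon B \to Y$ is a morphism $k\colon A \to B$ of $\CategoryC$ with $h \circ k = g$. In these terms, $f\colon X \to Y$ being terminal in the slice says exactly that for every $g\colon A \to Y$ there is a unique $k\colon A \to X$ with $f \circ k = g$.

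For the ``only if'' direction, I would assume $f$ has a two-sided inverse $\invrs{f}$; then for any $g\colon A \to Y$ the morphism $\invrs{f} \circ g$ is a map $g \to f$ in the slice, and if $k$ is any such map then precomposing $f \circ k = g$ with $\invrs{f}$ forces $k = \invrs{f} \circ g$. Hence $f$ is terminal. This direction is a routine calculation.

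For the ``if'' direction, I would first feed the object $\idd{Y}\colon Y \to Y$ into terminality to obtain a unique $k\colon Y \to X$ with $f \circ k = \idd{Y}$, i.e.\ a right inverse of $f$. The only genuine step is to see that $k$ is also a left inverse: both $\idd{X}$ and $k \circ f$ are morphisms from $f$ to $f$ in $\slice{\CategoryC}{Y}$ --- the latter since $f \circ (k \circ f) = (f \circ k) \circ f = f$ --- so uniqueness of maps into the terminal object $f$ gives $k \circ f = \idd{X}$, and $f$ is an isomorphism with inverse $k$.

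I expect the main (if still minor) subtlety to be precisely this last point: terminality has to be invoked twice, once on $\idd{Y}$ to manufacture the candidate inverse and once on $f$ itself, qua terminal object, to pin down the composite $k \circ f$; beyond that, the proof is a direct translation of definitions.
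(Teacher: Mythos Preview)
Your argument is correct and is exactly the standard proof of this well-known fact. The paper itself does not supply a proof of this proposition; it is stated in the Introduction as background and left unproven (the paper defers omitted proofs to its extended version), so there is nothing to compare against beyond noting that your approach is the canonical one.
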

This allows us to reduce further to the question: \emph{how far is an object from being terminal?}
Terminality can be split into the following pair of properties.
\begin{definition}\label{def: weak and subterminal}
    An object $\WTerm$ in a category $\CategoryC$ is
    \begin{itemize}
        \item \emph{weak terminal} if, for all objects $X$ of $\CategoryC$, there exists a morphism $X \to \WTerm$;
        \item \emph{subterminal} if, for all parallel pairs of morphisms $f, g\colon X \to \WTerm$, we have $f = g$.
    \end{itemize}
\end{definition}
Hence, to describe how far $\WTerm$ is from being terminal, we can separately describe how far $\WTerm$ is from being weak terminal and subterminal, respectively.

Following this chain of reasoning, we focus on classifying \emph{obstructions to weak terminality and subterminality} for objects in arbitrary categories.
Surprisingly, it turns out that there exists a natural way of associating certain \emph{pointed posets} to a pointed category (category with a chosen object), which we call the \emph{zeroth} and \emph{first homotopy poset}, because in a precise sense they generalise the $\pi_0$ and $\pi_1$ of a pointed groupoid seen as a homotopy 1-type.
This opens up the possibility of an \emph{invariant-based} approach to the formal study of compositionality: the homotopy posets contain no information that is not already in the functors and categories, but put it in a form which may be more tractable and intelligible.

In \autoref{sec: homotopy poset}, we give the definitions of homotopy posets and state their basic properties, demonstrating in which sense they answer our question about terminal objects.
In \autoref{sec: obstructions}, going backwards in our chain of reasoning, we apply them to the study of obstructions to morphisms being iso.
Finally, in \autoref{sec: qualifying}, we sketch through a couple of simple examples how our framework can be applied to the study of failures of compositionality, seen as failures of certain (op)lax functors to be strong.
Some particularly involved proofs are collected in the Appendix; we refer to the extended version \cite{puca2023obstructions} for other proofs and further details.

    \section{Homotopy posets}\label{sec: homotopy poset}
To begin, we focus on obstructions to weak terminality.
Having fixed a category $\CategoryC$, we interpret objects of a category $\CategoryC$ as points, and morphisms between them as paths. 
From this point of view, a weak terminal object is an object that is always reachable from any generic object $x$ in $\CategoryC$.

Intuitively, we can fix a ``weak terminal object candidate''\footnote{
    In this paper, we will use $\WTerm$ to denote ``terminal object candidates'', that is, objects for which we want to investigate how far they are from being terminal. For an object that we know or presume to be terminal, we will instead use the notation $\Term$.
} $\WTerm$ and consider any object $x$ such that there is \emph{no} morphism $x \to \WTerm$ as an \emph{obstruction to weak terminality}.
Moreover:
\begin{itemize}
    \item If $x, y$ are obstructions for $\WTerm$, and there are morphisms $x \to y$ and $y \to x$, we regard them as equivalent: if there were a morphism $x \to \WTerm$ there would be a morphism $y \to \WTerm$, and vice versa.
    \item If $x,y$ are obstructions for $\WTerm$ and there is a morphism $x \to y$, then we regard $x$ as a ``more fundamental obstruction than $y$''. 
    This is because, if there were a morphism $y \to \WTerm$, we would automatically obtain a morphism $x \to \WTerm$ by composition (one can ``go from $x$ to $y$ and then to $\WTerm$''), while the opposite is not true.
\end{itemize}
We will devote this section to making this intuition formal.
\begin{definition}[Poset reflection] \label{def: poset reflection}
    Let $\catpos$ be the large\footnote{We will denote categories in \textit{italics} and large categories in \textbf{bold}. Note that in our constructions, what matters is only the \emph{relative} size: a construction which associates a poset to a category can be applied to a large category, producing a large poset.}
    category of posets and order\nbd preserving maps.
    There is a full and faithful functor $\imath\colon \catpos \incl \catcat$, whose image consists of the categories that are
    \begin{itemize}
        \item \emph{thin} (each hom\nbd set contains at most one morphism), and
        \item \emph{skeletal} (every isomorphism is an automorphism).
    \end{itemize}
    The \emph{poset reflection} $\posref{\CategoryC}$ of a category $\CategoryC$ is its image under the left adjoint $\posref{-}\colon \catcat \to \catpos$ to $\imath$:
    \begin{itemize}
        \item the elements of $\posref{\CategoryC}$ are equivalence classes $\posref{x}$ of objects $x$ of $\CategoryC$, where $\posref{x} = \posref{y}$ if and only if there exist morphisms $x \to y$ and $y \to x$ in $\CategoryC$, and
        \item $\posref{x} \leq \posref{y}$ if and only if there exists a morphism $x \to y$ in $\CategoryC$.
    \end{itemize}
\end{definition}
\begin{proposition}\label{prop: weak terminal is greatest in posref}
    Let $\CategoryC$ be a category and $\WTerm$ an object in $\CategoryC$.
    The following are equivalent:
    \begin{enumerate}[label=(\alph*)]
        \item $\WTerm$ is a weak terminal (respectively, initial) object in $\CategoryC$;
        \item $\posref{\WTerm}$ is the greatest (respectively, least) element of $\posref{\CategoryC}$.
    \end{enumerate}
\end{proposition}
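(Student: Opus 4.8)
The plan is to prove the equivalence by directly unfolding both conditions through \autoref{def: poset reflection} and \autoref{def: weak and subterminal}, and then to obtain the parenthetical ``initial/least'' variant by dualisation. First I would treat the weak terminal case. By definition, $\WTerm$ is weak terminal precisely when for every object $X$ of $\CategoryC$ there exists a morphism $X \to \WTerm$; and by the explicit description of the poset reflection, such a morphism exists if and only if $\posref{X} \leq \posref{\WTerm}$ in $\posref{\CategoryC}$. Hence condition (a) is equivalent to: $\posref{X} \leq \posref{\WTerm}$ for all objects $X$ of $\CategoryC$.

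Next I would note that the assignment $x \mapsto \posref{x}$ is, by construction, surjective on underlying sets: every element of $\posref{\CategoryC}$ is an equivalence class $\posref{x}$ of some object $x$ of $\CategoryC$. Therefore quantifying over all objects of $\CategoryC$ is the same as quantifying over all elements of $\posref{\CategoryC}$, so the condition just obtained is literally the assertion that $\posref{\WTerm}$ is the greatest element of $\posref{\CategoryC}$, which is (b). This settles the ``weak terminal / greatest'' equivalence. The only point requiring any care is precisely this passage between ``for all objects $X$'' and ``for all elements of $\posref{\CategoryC}$'', i.e.\ the surjectivity of the reflection map on objects — but this is immediate from \autoref{def: poset reflection}, so there is no genuine obstacle.

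Finally, for the ``initial / least'' case I would invoke duality: a weak initial object of $\CategoryC$ is the same as a weak terminal object of $\opp{\CategoryC}$, the poset reflection commutes with taking opposites in the sense that $\posref{\opp{\CategoryC}} = \opp{(\posref{\CategoryC})}$ (both have the same underlying equivalence classes, with the order reversed), and the greatest element of $\opp{P}$ is by definition the least element of $P$. Applying the already-proved equivalence to $\opp{\CategoryC}$ and $\WTerm$ then yields that $\WTerm$ is weak initial in $\CategoryC$ if and only if $\posref{\WTerm}$ is the least element of $\posref{\CategoryC}$, completing the proof.
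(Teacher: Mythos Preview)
Your proof is correct. The paper itself does not include a proof of this proposition in the text provided (it defers such routine verifications to the extended version), but your argument---unfolding the definitions of weak terminality and of the order on $\posref{\CategoryC}$, noting surjectivity of $x \mapsto \posref{x}$, and dualising for the initial/least case---is exactly the straightforward verification one would expect and is undoubtedly what the authors have in mind.
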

\begin{definition}[Arrow category]\label{def: arrow category}
    Let $\wkarr$ be the ``walking arrow'' category, that is, the free category on the graph
    \[\begin{tikzcd}[sep=scriptsize]
        0 && 1
        \arrow["a", from=1-1, to=1-3]
    \end{tikzcd}.\]
    The \emph{arrow category} of a category $\CategoryC$ is the functor category $\CategoryC^\wkarr$.
    Explicitly, the objects of $\CategoryC^\wkarr$ are morphisms of $\CategoryC$, while morphisms of $\CategoryC^\wkarr$ are commutative squares in $\CategoryC$.
    There are functors $\mathrm{dom}$, $\mathrm{cod}\colon \CategoryC^\wkarr \to \CategoryC$ which, given a morphism $(h_0, h_1)$, return $h_0$, respectively, $h_1$.
\end{definition}
\begin{definition}[Category of pointed objects]\label{def: pointed objects category}
    Let $\CategoryC$ be a category with a chosen terminal object $\Term$.
    A \emph{pointed object} $(x, v)$ of $\CategoryC$ is an object $x$ of $\CategoryC$ together with a morphism $v\colon \Term \to x$, called its \emph{basepoint}.
    The \emph{category of pointed objects} of $\CategoryC$ --- denoted by $\pointed{\CategoryC}$ --- is the coslice category $\slice{\Term}{\CategoryC}$.
\end{definition}
\begin{proposition}[Functoriality of arrow and pointed objects categories]\label{prop: functoriality of arrow and pointed cats}
    Let $\fun{F}\colon \CategoryC \to \CategoryD$ be a functor.
    Then $\fun{F}$ lifts to a functor $\fun{F}^\wkarr\colon \CategoryC^\wkarr \to \CategoryD^\wkarr$
    using the pointwise action of $\fun{F}$ on $\CategoryC$. 
    
    If moreover $\CategoryC$ and $\CategoryD$ have a chosen terminal object, and if $\fun{F}$ preserves it, then it also lifts to a functor $\pointed{\fun{F}}\colon \pointed{\CategoryC} \to \pointed{\CategoryD}$ sending a pointed object $(x, v)$ of $\CategoryC$ to $(\fun{F}x, \fun{F}v)$, a pointed object of $\CategoryD$.
\end{proposition}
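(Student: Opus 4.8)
The plan is to exploit the fact that both the arrow category and the category of pointed objects are instances of standard categorical constructions that are manifestly functorial, so that the two lifts arise simply by post-composition with $\fun{F}$.

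For the first claim, recall that $\CategoryC^\wkarr$ is by definition the functor category, which we also write $[\wkarr, \CategoryC]$. Post-composition with $\fun{F}$ sends a functor $H\colon \wkarr \to \CategoryC$ to $\fun{F} \circ H\colon \wkarr \to \CategoryD$ and a natural transformation $\alpha$ between two such functors to its left whiskering $\fun{F} \cdot \alpha$; this assignment is the desired functor $\fun{F}^\wkarr\colon [\wkarr, \CategoryC] \to [\wkarr, \CategoryD]$, and preservation of identities and composites is the usual functoriality of whiskering. Unwinding the definition of $\wkarr$: an object $H$ picks out the morphism $h = H(a)$ of $\CategoryC$, and $\fun{F}^\wkarr$ sends it to $\fun{F}h$; a morphism of $\CategoryC^\wkarr$ is a commuting square $(h_0, h_1)$, and $\fun{F}^\wkarr$ sends it to $(\fun{F}h_0, \fun{F}h_1)$, which commutes because $\fun{F}$ preserves composition. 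The compatibilities $\mathrm{dom} \circ \fun{F}^\wkarr = \fun{F} \circ \mathrm{dom}$ and $\mathrm{cod} \circ \fun{F}^\wkarr = \fun{F} \circ \mathrm{cod}$ are then immediate. Alternatively, one can take the displayed formulas as the definition of $\fun{F}^\wkarr$ and verify the two functor axioms directly, which is a routine check.

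For the second claim, write $\Term_\CategoryC$ and $\Term_\CategoryD$ for the chosen terminal objects, so that $\pointed{\CategoryC} = \slice{\Term_\CategoryC}{\CategoryC}$ and likewise for $\CategoryD$. Since $\fun{F}$ preserves the chosen terminal object we have $\fun{F}\Term_\CategoryC = \Term_\CategoryD$; if one only assumes preservation up to the canonical comparison isomorphism $\theta\colon \fun{F}\Term_\CategoryC \iso \Term_\CategoryD$, one precomposes everything below with $\invrs{\theta}$. Define $\pointed{\fun{F}}$ on objects by $(x, v\colon \Term_\CategoryC \to x) \mapsto (\fun{F}x, \fun{F}v\colon \Term_\CategoryD \to \fun{F}x)$, which is well-typed by the previous sentence; and on a morphism $h\colon (x,v) \to (y,w)$, i.e.\ a morphism $h\colon x \to y$ of $\CategoryC$ with $h$ compatible with the basepoints, by $\fun{F}h$, which again respects the basepoints because $\fun{F}$ is functorial and $\fun{F}\Term_\CategoryC = \Term_\CategoryD$. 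Preservation of identities and composites by $\pointed{\fun{F}}$ is inherited directly from $\fun{F}$; more abstractly, $\pointed{\fun{F}}$ is the functor induced on coslice categories by $\fun{F}$ together with the identification of chosen terminal objects.

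There is no genuine obstacle here; the only point requiring any care is the precise reading of ``$\fun{F}$ preserves the terminal object''. Under the strict reading $\fun{F}\Term_\CategoryC = \Term_\CategoryD$ both constructions are immediate; under the weaker reading one must thread the canonical isomorphism $\theta$ through the definition of $\pointed{\fun{F}}$ and check that the resulting functor does not depend on auxiliary choices, which follows from the uniqueness of maps into a terminal object. I would therefore state the proposition (as the excerpt does) with the strict convention in force, and relegate the up-to-isomorphism version to a remark.
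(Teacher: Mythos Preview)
Your proof is correct and follows the standard approach: the arrow-category lift is post-composition in the functor category $[\wkarr,-]$, and the pointed-objects lift is the induced functor on coslices together with the identification of terminal objects. The paper itself does not include a proof of this proposition in the main text, treating it as routine and deferring details to the extended version; your argument is exactly the kind of verification one would expect there, and your remark about the strict versus up-to-iso reading of ``preserves the terminal object'' is a sensible clarification.
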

\begin{definition}[Quotient of an object by a morphism]\label{def: quotient by a morphism}
    Let $\CategoryC$ be a category with chosen pushouts and a terminal object $\Term$.
    Given a morphism $f\colon x \to y$, the \emph{quotient of $y$ by $f$} is the pushout
    \begin{equation*}
        \begin{tikzcd}[sep=scriptsize]
            x && \Term \\
            \\
            y && {y\sslash f}
            \arrow["{!}", from=1-1, to=1-3]
            \arrow["f"', from=1-1, to=3-1]
            \arrow[from=3-1, to=3-3]
            \arrow["{[x]}", from=1-3, to=3-3]
            \arrow["\lrcorner"{anchor=center, pos=0.125, rotate=180}, draw=none, from=3-3, to=1-1]
        \end{tikzcd}
    \end{equation*}
    where $!\colon x \to \Term$ is the unique morphism from $x$ to the terminal object.
\end{definition}
\begin{proposition}[Functoriality of the quotient]\label{prop: functoriality of the quotient}
    If $\CategoryC$ has chosen pushouts and a terminal object $\Term$, then for each morphism $f\colon x \to y$ in $\CategoryC$ \autoref{def: quotient by a morphism} determines a pointed object $\fun{Q}(f) \eqdef (y \sslash f, [x])$ of $\CategoryC$. This extends to a functor $\fun{Q}\colon \CategoryC^\wkarr \to \pointed{\CategoryC}$.
    If both $\CategoryC$ and $\CategoryD$ have chosen pushouts and a chosen terminal object $\Term$, and if $\fun{F}$ preserves them, then $\fun{F}$ induces a commutative square of functors
    \begin{equation*}
        \begin{tikzcd}[sep=scriptsize]
            \CategoryC^\wkarr && \pointed{\CategoryC} \\
            \\
            \CategoryD^\wkarr && \pointed{\CategoryD}.
            \arrow["\fun{Q}", from=1-1, to=1-3]
            \arrow["\fun{F}^\wkarr"', from=1-1, to=3-1]
            \arrow["\fun{Q}"', from=3-1, to=3-3]
            \arrow["\pointed{\fun{F}}", from=1-3, to=3-3]
        \end{tikzcd}
    \end{equation*}
\end{proposition}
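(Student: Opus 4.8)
The plan is to construct $\fun{Q}$ by invoking the universal property of the chosen pushouts, to extract functoriality from the \emph{uniqueness} clause of that universal property, and then to read off the commuting square from the hypothesis that $\fun{F}$ preserves chosen pushouts and the chosen terminal object.

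On objects there is nothing new: \autoref{def: quotient by a morphism} assigns to $f\colon x\to y$ the pointed object $\fun{Q}(f)\eqdef(y\sslash f,[x])$, where, writing $!_x\colon x\to\Term$, the maps $[x]\colon\Term\to y\sslash f$ and $\iota_y\colon y\to y\sslash f$ are the two coprojections of the pushout, so that $[x]\circ!_x=\iota_y\circ f$. A morphism in $\CategoryC^\wkarr$ from $f\colon x\to y$ to $f'\colon x'\to y'$ is a commuting square $(h_0,h_1)$ with $h_1\circ f=f'\circ h_0$. First I would check that $([x'],\,\iota_{y'}\circ h_1)$ is a cocone under the span $\Term\xleftarrow{!_x}x\xrightarrow{f}y$: indeed $\iota_{y'}\circ h_1\circ f=\iota_{y'}\circ f'\circ h_0=[x']\circ!_{x'}\circ h_0=[x']\circ!_x$, using the pushout relation for $f'$ and the uniqueness of maps into $\Term$. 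The universal property of $y\sslash f$ then produces a unique morphism $\fun{Q}(h_0,h_1)\colon y\sslash f\to y'\sslash f'$ with $\fun{Q}(h_0,h_1)\circ\iota_y=\iota_{y'}\circ h_1$ and $\fun{Q}(h_0,h_1)\circ[x]=[x']$; the second equation says exactly that $\fun{Q}(h_0,h_1)$ is a morphism of pointed objects $\fun{Q}(f)\to\fun{Q}(f')$ in $\pointed{\CategoryC}=\slice{\Term}{\CategoryC}$.

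Functoriality is then immediate from uniqueness. The identity square is sent to $\idd{y\sslash f}$, since that morphism satisfies the two defining equations; and for composable squares $(h_0,h_1)$ and $(k_0,k_1)$, both $\fun{Q}\big((k_0,k_1)\circ(h_0,h_1)\big)$ and $\fun{Q}(k_0,k_1)\circ\fun{Q}(h_0,h_1)$ satisfy the equations that characterise the map out of $y\sslash f$, hence they agree. Note that this requires no pasting of pushout squares: the chosen-pushouts hypothesis is used only to pin down a specific object $y\sslash f$ for each $f$, making $\fun{Q}$ a strict functor. For the square of functors, assume $\fun{F}$ preserves the chosen terminal object (so $\fun{F}\Term=\Term$ and hence $\fun{F}!_x=!_{\fun{F}x}$) and the chosen pushouts. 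Applying $\fun{F}$ to the defining pushout square of $y\sslash f$ yields precisely the defining pushout square of $\fun{F}y\sslash\fun{F}f$, so $\fun{F}(y\sslash f)=\fun{F}y\sslash\fun{F}f$ and $\fun{F}[x]=[\fun{F}x]$; this is the equality $\pointed{\fun{F}}(\fun{Q}f)=\fun{Q}(\fun{F}^\wkarr f)$ on objects. On morphisms, $\fun{F}\big(\fun{Q}(h_0,h_1)\big)$ satisfies the equations defining $\fun{Q}(\fun{F}h_0,\fun{F}h_1)$, so the two coincide by uniqueness, giving $\pointed{\fun{F}}\circ\fun{Q}=\fun{Q}\circ\fun{F}^\wkarr$.

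The step I expect to require the most care is the interpretation of ``preserves'' in the hypothesis on $\fun{F}$. The argument above goes through verbatim when this is read strictly, i.e.\ $\fun{F}$ sends the chosen pushout cocone of each span to the chosen pushout cocone of the image span (and likewise for the terminal object). Under the weaker reading that $\fun{F}$ merely sends pushout squares to pushout squares, one obtains only a canonical comparison isomorphism $\fun{F}(y\sslash f)\cong\fun{F}y\sslash\fun{F}f$, and the square of functors commutes up to a canonical natural isomorphism rather than on the nose; since the statement asserts a strictly commutative square, I would adopt the strict reading (or, equivalently, transport $\CategoryD$'s chosen structure along $\fun{F}$). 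Everything else is a routine diagram chase with the pushout's universal property.
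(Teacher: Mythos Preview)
The paper does not include a proof of this proposition in the present version; it is stated without proof and presumably deferred to the extended version \cite{puca2023obstructions} or treated as standard. Your argument is correct and is exactly the expected one: define $\fun{Q}$ on morphisms via the universal property of the chosen pushout, derive functoriality from the uniqueness clause, and obtain strict commutativity of the square from strict preservation of the chosen colimits and terminal object. Your closing remark on the strict reading of ``preserves'' is also apt, since the paper explicitly relies on this when applying the proposition to $\posref{-}\colon\catcat\to\catpos$, noting that ``the preservation can be made strict with respect to a choice on both sides''.
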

The categories $\catcat$ and $\catpos$ have all limits and colimits, so in particular they have pushouts and a terminal object. The poset reflection functor $\posref{-}\colon \catcat \to \catpos$ sends the terminal category to the terminal poset, and preserves pushouts, since it is a left adjoint.
The preservation can be made strict with respect to a choice on both sides.
We are in the conditions of \autoref{prop: functoriality of the quotient}: there is a commutative square
\begin{equation} \label{eq: quotient and posref}
    \begin{tikzcd}[sep=scriptsize]
        \catcat^\wkarr && \pointed{\catcat} \\
        \\
        \catpos^\wkarr && \pointed{\catpos}.
        \arrow["\fun{Q}", from=1-1, to=1-3]
        \arrow["\posref{-}^\wkarr"', from=1-1, to=3-1]
        \arrow["\fun{Q}", from=3-1, to=3-3]
        \arrow["\pointed{\posref{-}}", from=1-3, to=3-3]
    \end{tikzcd}
\end{equation}
We are now ready to define the object of interest of this section.
\begin{definition}[Zeroth homotopy poset] \label{def: 0th-directed homotopy poset}
    Let $\CategoryC$ be a category and $x$ an object in $\CategoryC$.
    The \emph{zeroth homotopy poset of $\CategoryC$ over $x$} is the pointed poset
    \begin{equation*}
        (\dhom{0}{\CategoryC}{x}, \; [x])
    \end{equation*}
    obtained by applying the functor $\catcat^\wkarr \to \pointed{\catpos}$ from \autoref{eq: quotient and posref} to the slice projection functor
    \begin{equation*}
        \mathrm{dom}\colon \slice{\CategoryC}{x} \to \CategoryC.
    \end{equation*}
\end{definition}
Let us unravel the definition of $\dhom{0}{\cat{C}}{x}$ to a more explicit form.
We start from the projection functor $\mathrm{dom}\colon \slice{\cat{C}}{x} \to \cat{C}$.
    To this we may either apply $\fun{Q}$ or $\posref{-}^\wkarr$.
    Since quotients in $\catpos$ are simpler to compute than quotients in $\catcat$, we apply poset reflection first, which gives us an order-preserving map
    \begin{equation*}
        \posref{\mathrm{dom}}\colon \posref{\slice{\cat{C}}{x}} \to \posref{\cat{C}}.
    \end{equation*}
    Unravelling the explicit definition of poset reflection for $\slice{\cat{C}}{x}$, we see that:
    \begin{itemize}
        \item an element of $\posref{\slice{\cat{C}}{x}}$ is an equivalence class $\posref{f\colon y \to x}$ of morphisms of $\cat{C}$ with codomain $x$, where $\posref{f} = \posref{g}$ if and only if $f$ factors through $g$ and $g$ factors through $f$, and
        \item $\posref{f} \leq \posref{g}$ if and only if $f$ factors through $g$.
    \end{itemize}
    The map $\posref{\mathrm{dom}}$ sends $\posref{f}$ to $\posref{\mathrm{dom}\, f}$.
    The image of $\posref{\mathrm{dom}}$ is then the set
    \begin{equation*}
        \{ \posref{y} \mid \text{there exists a morphism $f\colon y \to x$ in $\cat{C}$} \},
    \end{equation*}
    which is, equivalently, the lower set of $\posref{x}$ in $\posref{\cat{C}}$.

    Applying $\fun{Q}\colon \catpos^\wkarr \to \pointed{\catpos}$ to this map produces the quotient of $\posref{\cat{C}}$ with all elements of this set identified, pointed with the element resulting from their identification, which we denote by $[x]$.
    Hence, an element of $\dhom{0}{\CategoryC}{x}$ is either $[x]$, or it is $\posref{y}$ for some object $y$ such that there exists no morphism $f\colon y \to x$ in $\CategoryC$.
    The order relation is defined as follows, by case distinction:
    \begin{itemize}
        \item $[x] \leq [x]$ trivially;
        \item $[x] \leq \posref{y}$ if and only if there exists a span $(x \xleftarrow{f} z \xrightarrow{g} y)$ in $\CategoryC$;
        \item it is never the case that $\posref{y} \leq [x]$;
        \item $\posref{y} \leq \posref{z}$ if and only if there exists a morphism $f\colon y \to z$ in $\CategoryC$.
    \end{itemize}
    Notice that $[x]$ is always minimal in $\dhom{0}{\CategoryC}{x}$.

The partial order on $\dhom{0}{\CategoryC}{x}$ ranks obstructions to weak terminality by ``size'': if we removed an obstruction $\posref{y}$, adding a morphism $y \to x$, we would also have to remove all the ``smaller'' obstructions $\posref{z} \leq \posref{y}$.
The minimal element $[x]$ represents the ``non-obstructions'':
\begin{proposition} \label{prop:dhom0_trivial_when_weak_terminal}
Let $\cat{C}$ be a category and $x$ an object in $\cat{C}$.
The following are equivalent:
\begin{enumerate}[label=(\alph*)]
    \item $\dhom{0}{\cat{C}}{x} = \{[x]\}$;
    \item $x$ is a weak terminal object in $\cat{C}$.
\end{enumerate}
\end{proposition}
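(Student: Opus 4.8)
The plan is to read off both implications directly from the explicit description of $\dhom{0}{\cat{C}}{x}$ unravelled immediately before the statement, so that no further manipulation of pushouts or poset reflections is needed. Recall from that description that the underlying set of $\dhom{0}{\cat{C}}{x}$ is
\[
\{[x]\} \;\cup\; \bigl\{\, \posref{y} \mid y \in \Obj(\cat{C}),\ \text{there is no morphism } y \to x \text{ in } \cat{C} \,\bigr\},
\]
where moreover $[x]$ is distinct from every $\posref{y}$ occurring in the second set. This distinctness is exactly what the computation of $\fun{Q}$ in $\catpos$ gives us: the relevant pushout merely contracts the lower set $\{\posref{z} \mid \text{there is a morphism } z \to x\}$ of $\posref{x}$ in $\posref{\cat{C}}$ to a single point (namely $[x]$) and leaves every other element of $\posref{\cat{C}}$ untouched, and an object $y$ with no morphism to $x$ has $\posref{y}$ outside that lower set.

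For the implication (b) $\Rightarrow$ (a), I would suppose $x$ is weak terminal. Then by \autoref{def: weak and subterminal} every object $y$ of $\cat{C}$ admits a morphism $y \to x$, so the second set displayed above is empty, and therefore $\dhom{0}{\cat{C}}{x} = \{[x]\}$.

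For (a) $\Rightarrow$ (b), I would argue by contraposition. If $x$ is not weak terminal, then by \autoref{def: weak and subterminal} there is an object $y$ of $\cat{C}$ admitting no morphism $y \to x$; hence $\posref{y}$ is an element of $\dhom{0}{\cat{C}}{x}$, and it is distinct from $[x]$ by the distinctness observation above, so $\dhom{0}{\cat{C}}{x} \neq \{[x]\}$. (Alternatively, one could combine \autoref{prop: weak terminal is greatest in posref} with the description of $\fun{Q}$ in $\catpos$: $x$ is weak terminal iff $\posref{x}$ is the greatest element of $\posref{\cat{C}}$ iff the lower set of $\posref{x}$ is all of $\posref{\cat{C}}$ iff contracting that lower set yields the one-point poset.)

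There is essentially no hard step here, since the work has already been done in setting up the explicit form of $\dhom{0}{\cat{C}}{x}$. The one point I would be most careful to state is precisely the claim that $[x] \neq \posref{y}$ for the witnessing obstruction $y$ — that passing to the quotient does not accidentally collapse a genuine obstruction onto the basepoint — which, as noted, is immediate from the shape of the pushout defining $\fun{Q}$ on posets.
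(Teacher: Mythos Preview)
Your argument is correct and is exactly the intended one: the paper does not spell out a proof of this proposition (it is deferred to the extended version), precisely because it follows immediately from the explicit description of $\dhom{0}{\cat{C}}{x}$ unpacked just before the statement. Your care about the distinctness $[x] \neq \posref{y}$ is appropriate and your justification via the pushout in $\catpos$ is the right one.
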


The notation and terminology is suggestive of the $\pi_0$ of a pointed topological space or groupoid, that is, its set of connected components, pointed with the connected component of the basepoint. 
The following result shows that, indeed, the notions coincide when $\cat{C}$ happens to be a groupoid.

\begin{proposition}[$\dhom{0}{\cat{G}}{x}$ for a groupoid]\label{prop: dhom0 is pi0 for groupoids}
    Let $\cat{G}$ be a groupoid and $x$ an object in $\cat{G}$.
    Then
    \begin{enumerate}
        \item $\dhom{0}{\cat{G}}{x}$ is a ``set'', that is, a discrete poset, and
        \item as a pointed set, it is isomorphic to the set $\pi_0(\cat{G})$ of connected components of $\cat{G}$, pointed with the connected component of $x$.
    \end{enumerate}
\end{proposition}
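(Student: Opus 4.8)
The plan is to argue directly from the explicit, unravelled description of $\dhom{0}{\cat{C}}{x}$ recorded just before the statement, feeding in the one special feature of a groupoid: every morphism is invertible, so ``there exists a morphism $y \to z$'' is a symmetric relation, and it holds precisely when $y$ and $z$ lie in the same connected component of $\cat{G}$.

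First I would record what poset reflection does to a groupoid. Since a morphism $y \to z$ in $\cat{G}$ comes with an inverse $z \to y$, the defining relation $\posref{y} \le \posref{z}$ of \autoref{def: poset reflection} is automatically symmetric; hence $\posref{\cat{G}}$ is a discrete poset, $\posref{y} = \posref{z}$ holds exactly when $y$ and $z$ are connected, and the underlying set of $\posref{\cat{G}}$ is canonically $\pi_0(\cat{G})$. The same remark identifies the objects $y$ admitting \emph{no} morphism $y \to x$ with the objects $y$ that are \emph{not} in the connected component of $x$.

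For part (1), I would run through the four clauses of the order on $\dhom{0}{\cat{G}}{x}$. Its elements are $[x]$ together with the classes $\posref{y}$ for $y$ not connected to $x$. If $\posref{y} \ne \posref{z}$ are two such classes, then $y$ and $z$ are not connected, so there is no morphism $y \to z$ and hence $\posref{y} \not\le \posref{z}$. The relation $\posref{y} \le [x]$ never holds by definition, and $[x] \le \posref{y}$ would require a span $x \xleftarrow{f} w \xrightarrow{g} y$; but inverting $f$ yields a morphism $x \to y$, contradicting that $y$ is not connected to $x$. So the only instances of $\le$ are the reflexive ones, i.e. $\dhom{0}{\cat{G}}{x}$ is discrete.

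For part (2), the underlying set is $\{[x]\} \sqcup \{\, \posref{y} \mid y \text{ not connected to } x \,\}$. I would define $\varphi\colon \pi_0(\cat{G}) \to \dhom{0}{\cat{G}}{x}$ by sending the component of $x$ to $[x]$ and the component of any $y$ not connected to $x$ to $\posref{y}$. By the identifications above $\varphi$ is well defined, injective (since $\posref{y} = \posref{y'}$ iff $y$ and $y'$ are connected), and surjective; and it sends the basepoint of $\pi_0(\cat{G})$ to the basepoint $[x]$, so it is an isomorphism of pointed sets. I do not anticipate a genuine obstacle; the only step requiring care is the span clause, where one must use invertibility to see that a span between $x$ and a putative obstruction would already connect them, hence cannot occur.
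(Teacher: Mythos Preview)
Your proof is correct: the explicit description of $\dhom{0}{\cat{C}}{x}$ given just before the statement reduces everything to checking the four clauses of the order relation, and invertibility in a groupoid collapses each nontrivial clause exactly as you argue. The one delicate point, the span clause for $[x] \le \posref{y}$, you handle correctly by inverting one leg.

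The paper itself does not include a proof of this proposition; it is deferred to the extended version \cite{puca2023obstructions}. That said, your argument is the direct verification one would expect from the unravelled description the paper provides, and there is no meaningfully different route available here: once $\posref{\cat{G}}$ is identified with the discrete set $\pi_0(\cat{G})$, the quotient by the lower set of $\posref{x}$ simply collapses the single point $\posref{x}$ to the basepoint and leaves the rest untouched.
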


    Now, we investigate obstructions to \emph{subterminality}.
Our main strategy will be to recast subterminality in a way that allows us to leverage \autoref{def: 0th-directed homotopy poset}.
We know that an object $\WTerm$ fails to be subterminal when, for an object $x$, the arrow $x \to \WTerm$ is not unique.
As such, we will describe obstructions to subterminality as pairs of parallel, unequal arrows.
\begin{definition}[Category of parallel arrows over an object]\label{def: category of parallel arrows}
    Let $\CategoryC$ be a category and $x$ an object in $\CategoryC$.
    The \emph{category of parallel arrows in $\CategoryC$ over $x$} is the category $\pararr{\CategoryC}{x}$ where:
    \begin{itemize}
        \item Objects are pairs of morphisms $(f_0, f_1\colon y \to x)$ with codomain $x$.
        \item A morphism from $(f_0, f_1\colon y \to x)$ to $(g_0, g_1\colon z \to x)$ is a morphism $h\colon y \to z$ such that $f_0 = h\Cp g_0$ and $f_1 = h\Cp g_1$.
    \end{itemize}
    This comes with a projection functor $\mathrm{dom}\colon \pararr{\CategoryC}{x} \to \CategoryC$ sending a parallel pair to its domain.
\end{definition}
\begin{proposition}\label{prop: subterminal as weak terminal parallel arrow}
    Let $\CategoryC$ be a category and $\WTerm$ an object in $\CategoryC$.
    The following are equivalent:
    \begin{enumerate}[label=(\alph*)]
        \item $\WTerm$ is subterminal in $\CategoryC$;
        \item $(\idd{\WTerm}, \idd{\WTerm})$ is a terminal object in $\pararr{\CategoryC}{\WTerm}$;
        \item $(\idd{\WTerm}, \idd{\WTerm})$ is a weak terminal object in $\pararr{\CategoryC}{\WTerm}$.
    \end{enumerate}
\end{proposition}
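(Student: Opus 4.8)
The plan is to prove the cycle of implications $(a) \Rightarrow (b) \Rightarrow (c) \Rightarrow (a)$. The step $(b) \Rightarrow (c)$ is immediate, since a terminal object is in particular weak terminal, so the work is concentrated in the other two implications, each of which is a short unravelling of \autoref{def: category of parallel arrows}.

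For $(a) \Rightarrow (b)$, I would take an arbitrary object $(f_0, f_1 \colon y \to \WTerm)$ of $\pararr{\CategoryC}{\WTerm}$ and determine what a morphism from it to $(\idd{\WTerm}, \idd{\WTerm})$ can be. By definition such a morphism is a map $h \colon y \to \WTerm$ satisfying $f_0 = h \Cp \idd{\WTerm}$ and $f_1 = h \Cp \idd{\WTerm}$, i.e.\ $h = f_0$ and $h = f_1$; so the only possible $h$ is $f_0$, and it is a legitimate morphism precisely when $f_0 = f_1$. Since $f_0, f_1$ form a parallel pair into $\WTerm$, subterminality yields $f_0 = f_1$, so this morphism exists; and the preceding analysis shows it is forced, hence unique. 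Therefore $(\idd{\WTerm}, \idd{\WTerm})$ is terminal in $\pararr{\CategoryC}{\WTerm}$.

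For $(c) \Rightarrow (a)$, I would take an arbitrary parallel pair $f, g \colon X \to \WTerm$ in $\CategoryC$ and view it as the object $(f, g \colon X \to \WTerm)$ of $\pararr{\CategoryC}{\WTerm}$. Weak terminality of $(\idd{\WTerm}, \idd{\WTerm})$ provides a morphism to it, that is, some $h \colon X \to \WTerm$ with $f = h \Cp \idd{\WTerm}$ and $g = h \Cp \idd{\WTerm}$; comparing these gives $f = h = g$. As $f, g$ were arbitrary, $\WTerm$ is subterminal.

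The whole argument is definition-chasing, so there is no serious obstacle; the only points requiring attention are the diagrammatic composition convention $f \Cp g$ (which is what makes a morphism of $\pararr{\CategoryC}{\WTerm}$ into the pair of identities collapse to the single equation $h = f_0 = f_1$), and the observation that in $(a) \Rightarrow (b)$ subterminality supplies exactly the \emph{existence} half of terminality while the \emph{uniqueness} half is automatic from the shape of the hom-sets in $\pararr{\CategoryC}{\WTerm}$.
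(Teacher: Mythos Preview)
Your proof is correct. The paper itself does not include a proof of this proposition (it defers routine proofs to the extended version \cite{puca2023obstructions}), but your cycle $(a)\Rightarrow(b)\Rightarrow(c)\Rightarrow(a)$ via direct unpacking of \autoref{def: category of parallel arrows} is exactly the natural argument, and your handling of the diagrammatic composition convention is correct.
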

\autoref{prop: subterminal as weak terminal parallel arrow} allows us to reduce the study of obstructions to subterminality of an object $\WTerm$ in $\CategoryC$ to the study of obstructions to weak terminality of $(\idd{\WTerm}, \idd{\WTerm})$ in $\pararr{\CategoryC}{\WTerm}$. 
\begin{definition}[First homotopy poset] \label{def: 1st-directed homotopy poset}
    Let $\CategoryC$ be a category and $x$ an object in $\CategoryC$.
    The \emph{first homotopy poset of $\CategoryC$ over $x$} is the pointed poset
    \begin{equation*}
        (\dhom{1}{\CategoryC}{x}, \, [x]) \eqdef \left(\dhom{0}{\pararr{\CategoryC}{x}}{(\idd{x}, \idd{x})}, \, [(\idd{x}, \idd{x})]\right).
    \end{equation*}
\end{definition}
Putting together the description of the 0th homotopy poset, the definition of $\pararr{\CategoryC}{x}$ in \autoref{def: category of parallel arrows}, and \autoref{prop: subterminal as weak terminal parallel arrow}, we see that an element of $\dhom{1}{\CategoryC}{x}$ is either $[x]$, or $\posref{(f, g)}$ for some parallel pair of morphisms $f, g\colon y \to x$ in $\CategoryC$ with $f \neq g$.
    The order relation is defined as follows:
    \begin{itemize}
        \item $[x] \leq [x]$ trivially;
        \item $[x] \leq \posref{(f, g\colon y \to x)}$ if and only if there exists a morphism $h\colon z \to y$ in $\CategoryC$ equalising $(f, g)$, that is, satisfying $h\Cp f = h\Cp g$;
        \item it is never the case that $\posref{(f, g)} \leq [x]$;
        \item $\posref{(f, g\colon y \to x)} \leq \posref{(f', g'\colon y' \to x)}$ if and only if there exists a morphism $h\colon y \to y'$ such that $f = h\Cp f'$ and $g = h\Cp g'$ in $\CategoryC$.
    \end{itemize}
\begin{proposition} \label{prop:dhom1_trivial_when_subterminal}
Let $\cat{C}$ be a category and $x$ an object in $\cat{C}$.
The following are equivalent:
\begin{enumerate}[label=(\alph*)]
    \item $\dhom{1}{\cat{C}}{x} = \{[x]\}$;
    \item $x$ is subterminal in $\cat{C}$.
\end{enumerate}
\end{proposition}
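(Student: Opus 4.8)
The plan is to deduce this by concatenating equivalences that are already available, applied not to $\cat{C}$ itself but to the auxiliary category $\pararr{\cat{C}}{x}$. By \autoref{def: 1st-directed homotopy poset}, we have, essentially by definition,
\[
  \bigl(\dhom{1}{\cat{C}}{x},\, [x]\bigr) = \bigl(\dhom{0}{\pararr{\cat{C}}{x}}{(\idd{x}, \idd{x})},\, [(\idd{x}, \idd{x})]\bigr),
\]
so that the condition $\dhom{1}{\cat{C}}{x} = \{[x]\}$ is literally the condition that $\dhom{0}{\pararr{\cat{C}}{x}}{(\idd{x}, \idd{x})}$ is the one-point pointed poset $\{[(\idd{x}, \idd{x})]\}$. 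This reduces the statement to one about the zeroth homotopy poset of $\pararr{\cat{C}}{x}$ over the object $(\idd{x},\idd{x})$.

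Next I would invoke \autoref{prop:dhom0_trivial_when_weak_terminal} with the category taken to be $\pararr{\cat{C}}{x}$ and the distinguished object taken to be $(\idd{x}, \idd{x})$: this yields that $\dhom{0}{\pararr{\cat{C}}{x}}{(\idd{x}, \idd{x})} = \{[(\idd{x}, \idd{x})]\}$ if and only if $(\idd{x}, \idd{x})$ is a weak terminal object of $\pararr{\cat{C}}{x}$. Finally, the equivalence of (a) and (c) in \autoref{prop: subterminal as weak terminal parallel arrow} says exactly that $(\idd{x}, \idd{x})$ is weak terminal in $\pararr{\cat{C}}{x}$ if and only if $x$ is subterminal in $\cat{C}$. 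Chaining the three biconditionals gives the claimed equivalence.

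The only point worth a moment's attention — and it is the nearest thing to an obstacle, though a minor one — is to confirm that applying \autoref{prop:dhom0_trivial_when_weak_terminal} to $\pararr{\cat{C}}{x}$ is legitimate: that proposition, and the construction of $\dhom{0}{-}{-}$ via \autoref{def: 0th-directed homotopy poset}, require no completeness or cocompleteness of the category being probed, since the chosen pushouts and terminal object used there are those of $\catpos$, not of $\pararr{\cat{C}}{x}$. Hence there is no hidden hypothesis to verify, and the proof is a pure concatenation of equivalences. If one preferred a self-contained argument one could instead take the explicit order-theoretic description of $\dhom{1}{\cat{C}}{x}$ recorded just before the statement and check directly that it collapses to $\{[x]\}$ precisely when every parallel pair $f, g\colon y \to x$ satisfies $f = g$ (using that $\idd{y}$ equalises such a pair exactly when $f=g$); this is the same content unwound by hand, so I would only include it as a remark rather than as the main proof.
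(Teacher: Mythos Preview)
Your proposal is correct and is exactly the argument the paper sets up: the paper does not spell out a proof of this proposition in the text (it defers such proofs to the extended version), but the surrounding exposition explicitly reduces subterminality to weak terminality of $(\idd{x},\idd{x})$ in $\pararr{\cat{C}}{x}$ via \autoref{prop: subterminal as weak terminal parallel arrow}, and then \autoref{def: 1st-directed homotopy poset} together with \autoref{prop:dhom0_trivial_when_weak_terminal} yields precisely your chain of biconditionals. Your remark that no (co)completeness of $\pararr{\cat{C}}{x}$ is needed is also correct and worth keeping.
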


\begin{corollary} \label{prop:dhoms_trivial_when_terminal}
Let $\cat{C}$ be a category and $x$ an object in $\cat{C}$.
The following are equivalent:
\begin{enumerate}[label=(\alph*)]
    \item $\dhom{0}{\cat{C}}{x} = \{[x]\}$ and $\dhom{1}{\cat{C}}{x} = \{[x]\}$,
    \item $x$ is a terminal object in $\cat{C}$.
\end{enumerate}
\end{corollary}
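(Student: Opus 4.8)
The plan is to deduce this corollary directly from \autoref{prop:dhom0_trivial_when_weak_terminal} and \autoref{prop:dhom1_trivial_when_subterminal}, together with the elementary observation, already anticipated after \autoref{def: weak and subterminal}, that an object is terminal precisely when it is both weak terminal and subterminal.

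First I would record that decomposition. If $x$ is terminal, then for every object $X$ there is a unique morphism $X \to x$; in particular at least one such morphism exists, so $x$ is weak terminal, and any parallel pair $f, g\colon X \to x$ must coincide by uniqueness, so $x$ is subterminal. Conversely, if $x$ is weak terminal then for each $X$ there is some morphism $X \to x$; if $x$ is moreover subterminal, any two such morphisms, being a parallel pair with codomain $x$, are equal, so the morphism $X \to x$ is in fact unique, i.e.\ $x$ is terminal.

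The corollary is then immediate: by \autoref{prop:dhom0_trivial_when_weak_terminal}, the condition $\dhom{0}{\cat{C}}{x} = \{[x]\}$ is equivalent to $x$ being weak terminal, and by \autoref{prop:dhom1_trivial_when_subterminal}, the condition $\dhom{1}{\cat{C}}{x} = \{[x]\}$ is equivalent to $x$ being subterminal. Conjoining the two equivalences and invoking the observation above, statement (a) holds if and only if $x$ is both weak terminal and subterminal, if and only if $x$ is terminal, which is statement (b).

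I do not expect a genuine obstacle here, since all the real content is already packaged in the two cited propositions; the remaining step is a one-line unwinding of the definition of a terminal object. The only subtlety worth flagging is that \emph{subterminal}, as defined, constrains parallel pairs into $x$ rather than asserting outright uniqueness of morphisms into $x$ — but these two formulations agree exactly once at least one morphism $X \to x$ is known to exist, which is precisely what weak terminality supplies, so the two halves combine without friction.
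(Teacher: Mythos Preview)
Your proposal is correct and matches the paper's intended reasoning: the corollary is stated without proof, as it follows immediately from \autoref{prop:dhom0_trivial_when_weak_terminal} and \autoref{prop:dhom1_trivial_when_subterminal} together with the decomposition of terminality into weak terminality plus subterminality noted after \autoref{def: weak and subterminal}. There is nothing to add or correct.
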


\begin{remark}\label{rem: pi1 of a groupoid}
    Recall that the (underlying set of the) fundamental group of a pointed topological space $(X, x)$ is defined by
    \begin{equation*}
        \pi_1(X, x) \eqdef \pi_0(\Omega(X, x), c_x)
    \end{equation*}
    where $\Omega(X, x)$ is the space of loops in $X$ based at $x$, and $c_x$ is the constant path at $x$.
    For a pointed groupoid, which may be seen as the fundamental groupoid of a pointed space, this reduces to the set of automorphisms of the object $x$, pointed with the identity automorphism.
    
    The definition of $\dhom{1}{\CategoryC}{x}$ is made in analogy with this, letting the category of parallel arrows over $x$ replace the space of loops based at $x$, and a pair of identity morphisms replace the constant path.
    The following result proves that, just like the zeroth homotopy poset, the first homotopy poset is a generalisation of its groupoidal analogue.
\end{remark}
\begin{proposition}[$\dhom{1}{\cat{G}}{x}$ for a groupoid]\label{prop: dhom1 is pi1 for groupoids}
    Let $\cat{G}$ be a groupoid and $x$ an object in $\cat{G}$.
    Then:
    \begin{enumerate}
        \item $\dhom{1}{\cat{G}}{x}$ is a ``set'', that is, a discrete poset, and
        \item as a pointed set, it is isomorphic to the underlying pointed set of the group $\pi_1(\cat{G}, x) = \homset{\cat{G}}{x}{x}$.
    \end{enumerate}
\end{proposition}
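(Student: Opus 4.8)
\textit{Proof proposal.} The plan is to reduce everything to \autoref{prop: dhom0 is pi0 for groupoids} by observing that, for a groupoid $\cat{G}$, the category $\pararr{\cat{G}}{x}$ of parallel arrows over $x$ is again a groupoid. Indeed, a morphism $h$ in $\pararr{\cat{G}}{x}$ from $(f_0, f_1\colon y \to x)$ to $(g_0, g_1\colon z \to x)$ is a morphism $h\colon y \to z$ of $\cat{G}$ with $f_i = h \Cp g_i$ for $i = 0, 1$; taking the inverse $h^{-1}\colon z \to y$ in $\cat{G}$ and precomposing the two equations with it gives $h^{-1}\Cp f_i = g_i$, so $h^{-1}$ is a morphism $(g_0, g_1) \to (f_0, f_1)$ in $\pararr{\cat{G}}{x}$, visibly a two-sided inverse of $h$. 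Since by \autoref{def: 1st-directed homotopy poset} we have $\dhom{1}{\cat{G}}{x} = \dhom{0}{\pararr{\cat{G}}{x}}{(\idd{x}, \idd{x})}$, applying \autoref{prop: dhom0 is pi0 for groupoids} to the groupoid $\pararr{\cat{G}}{x}$ and the object $(\idd{x}, \idd{x})$ immediately yields point 1 (discreteness) and identifies $\dhom{1}{\cat{G}}{x}$, as a pointed set, with $\pi_0(\pararr{\cat{G}}{x})$ pointed at the connected component of $(\idd{x}, \idd{x})$.

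It then remains to produce a pointed bijection $\pi_0(\pararr{\cat{G}}{x}) \cong \homset{\cat{G}}{x}{x}$. The natural candidate is the map $\phi$ sending an object $(f_0, f_1\colon y \to x)$ to the automorphism $f_0^{-1} \Cp f_1 \in \homset{\cat{G}}{x}{x}$ (well-defined since $\cat{G}$ is a groupoid). First I would check that $\phi$ is constant on connected components: if $h$ witnesses a morphism $(f_0, f_1) \to (g_0, g_1)$ as above, then $f_0^{-1} \Cp f_1 = (h \Cp g_0)^{-1} \Cp (h \Cp g_1) = g_0^{-1} \Cp h^{-1} \Cp h \Cp g_1 = g_0^{-1} \Cp g_1$, and since $\pararr{\cat{G}}{x}$ is a groupoid this suffices for $\phi$ to descend to a map $\bar\phi$ on $\pi_0$. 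Surjectivity of $\bar\phi$ is witnessed by $\phi(\idd{x}, \psi) = \psi$ for any $\psi\colon x \to x$; for injectivity, if $\phi(f_0, f_1) = \phi(g_0, g_1)$ with $f_i\colon y \to x$ and $g_i\colon z \to x$, then $h \eqdef f_0 \Cp g_0^{-1}\colon y \to z$ satisfies $h \Cp g_0 = f_0$ and $h \Cp g_1 = f_0 \Cp g_0^{-1} \Cp g_1 = f_0 \Cp f_0^{-1} \Cp f_1 = f_1$, using the hypothesis $g_0^{-1}\Cp g_1 = f_0^{-1}\Cp f_1$; hence $h$ is a morphism $(f_0, f_1) \to (g_0, g_1)$ and the two objects lie in the same component. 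Finally $\phi(\idd{x}, \idd{x}) = \idd{x}$, so $\bar\phi$ is pointed, which gives point 2.

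Most of this is routine diagram bookkeeping; the only genuinely load-bearing observation is that $\pararr{\cat{G}}{x}$ is a groupoid, which is what lets us cite \autoref{prop: dhom0 is pi0 for groupoids} rather than redo that argument. One could push a little further and note that each hom-set of $\pararr{\cat{G}}{x}$ is empty or a singleton --- the connecting morphism $f_0 \Cp g_0^{-1}$ constructed above is forced --- so $\pararr{\cat{G}}{x}$ is a thin groupoid equivalent to the discrete category on $\homset{\cat{G}}{x}{x}$, but this is more than we need. As with \autoref{prop: dhom0 is pi0 for groupoids}, I would stress that the isomorphism is only one of pointed sets: the group multiplication on $\homset{\cat{G}}{x}{x}$ is not recorded by the homotopy poset, just as composition of loops is invisible to $\pi_0$ of a loop space.
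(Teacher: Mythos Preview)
Your proof is correct. Note, however, that the paper does not actually include a proof of this proposition in the present version: it is one of the results deferred to the extended version \cite{puca2023obstructions}, so there is no in-paper argument to compare against line by line. That said, your approach is exactly the one the paper sets up: \autoref{def: 1st-directed homotopy poset} defines $\dhom{1}{\cat{G}}{x}$ as $\dhom{0}{\pararr{\cat{G}}{x}}{(\idd{x},\idd{x})}$, and \autoref{rem: pi1 of a groupoid} explicitly frames the construction as the analogue of $\pi_1 = \pi_0 \circ \Omega$, so reducing to \autoref{prop: dhom0 is pi0 for groupoids} via the observation that $\pararr{\cat{G}}{x}$ is a groupoid is precisely the intended route. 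The invariant $\phi(f_0,f_1) = f_0^{-1}\Cp f_1$ and the connecting morphism $h = f_0 \Cp g_0^{-1}$ are the canonical choices, and your verification of well-definedness, surjectivity, injectivity, and pointedness is clean. Your closing observation that $\pararr{\cat{G}}{x}$ is in fact a thin groupoid (a setoid) equivalent to the discrete category on $\homset{\cat{G}}{x}{x}$ is a nice sharpening, even if unnecessary for the statement as given.
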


\begin{remark}
    We mention here that the field of \emph{directed algebraic topology} \cite{grandis2009directed, fajstrup2016directed} has also produced ``non-invertible'' versions of $\pi_1$, namely, the fundamental \emph{category} and \emph{monoids}, that apply to directed spaces.
    If applied to a category, these pick out ``tautologically'' the category itself and its monoids of endomorphisms.
    To our knowledge, there is no strong relation to our line of research.
\end{remark}

%
%
%

To conclude this section, we show in what way the homotopy posets are functorial in the pair $(\cat{C}, x)$ of a category and an object.

\begin{proposition}[Functoriality of the homotopy posets] \label{prop: Homotopy posets are functorial}
Let $\cat{C}$ be a category, $i \in \{0, 1\}$.
Then:
\begin{enumerate}
    \item the assignment $x \mapsto \dhom{i}{\cat{C}}{x}$ extends to a functor
        $\dhom{i}{\cat{C}}{-}\colon \cat{C} \to \pointed{\catpos}$;
    \item a functor $\fun{F}\colon \cat{C} \to \cat{D}$ induces a natural transformation 
        $\pi_i(\fun{F})\colon \dhom{i}{\cat{C}}{-} \Rightarrow \dhom{i}{\cat{D}}{\fun{F}-}.$
\end{enumerate}
Given another functor $\fun{G}\colon \cat{D} \to \cat{E}$,  this assignment satisfies
\begin{equation*}
    \pi_i(\fun{F}\Cp \fun{G}) = \pi_i(\fun{F}) \Cp \pi_i(\fun{G}), \quad \quad \pi_i(\idd{C}) = \idd{\dhom{i}{\cat{C}}{-}}.
\end{equation*}
\end{proposition}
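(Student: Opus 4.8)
The plan is to recognise that, for each $i\in\{0,1\}$, the assignment $x\mapsto\dhom{i}{\cat{C}}{x}$ is a composite $\fun{H}\circ\fun{S}^{(i)}_{\cat{C}}$, where $\fun{H}\colon\catcat^{\wkarr}\to\pointed{\catpos}$ is the fixed functor of \autoref{eq: quotient and posref} (the one used in \autoref{def: 0th-directed homotopy poset}) and $\fun{S}^{(i)}_{\cat{C}}\colon\cat{C}\to\catcat^{\wkarr}$ is a tautological functor assembled from the slice construction. Once this factorisation is available, (1) is immediate, (2) becomes the statement that $\fun{S}^{(i)}$ is moreover functorial in $\cat{C}$ in a suitably lax-natural sense, and (3) --- the displayed compositionality identities --- becomes the assertion that this second functoriality is \emph{strict}; each of these reduces to standard (bi)functoriality properties of slices and of the category of parallel arrows, together with the obvious strict functoriality of whiskering with $\fun{H}$.

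For $i=0$, put $\fun{S}^{(0)}_{\cat{C}}(x)\eqdef(\mathrm{dom}\colon\slice{\cat{C}}{x}\to\cat{C})$ and send a morphism $a\colon x\to x'$ to the commutative square of $\catcat^{\wkarr}$ whose component on slices is post-composition $a_{*}\colon(f\colon y\to x)\mapsto(f\Cp a)$ and whose component on $\cat{C}$ is $\idd{\cat{C}}$; the square commutes because $\mathrm{dom}\circ a_{*}=\mathrm{dom}$, and $a\mapsto(a_{*},\idd{\cat{C}})$ is functorial because post-composition preserves identities and composites. By \autoref{def: 0th-directed homotopy poset}, $\fun{H}\circ\fun{S}^{(0)}_{\cat{C}}=\dhom{0}{\cat{C}}{-}$, which proves (1) for $i=0$. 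For $i=1$, put $\fun{S}^{(1)}_{\cat{C}}(x)\eqdef(\mathrm{dom}\colon\slice{\pararr{\cat{C}}{x}}{(\idd{x},\idd{x})}\to\pararr{\cat{C}}{x})$, so that $\fun{H}\circ\fun{S}^{(1)}_{\cat{C}}=\dhom{1}{\cat{C}}{-}$ by \autoref{def: 1st-directed homotopy poset}. The functoriality of this assignment in $x$ is best seen via the simplification that, unwinding \autoref{def: category of parallel arrows}, a morphism of $\pararr{\cat{C}}{x}$ into $(\idd{x},\idd{x})$ is the same thing as a morphism $h\colon y\to x$ of $\cat{C}$ (its domain pair is forced to be $(h,h)$), so that $\slice{\pararr{\cat{C}}{x}}{(\idd{x},\idd{x})}$ is canonically, and naturally in $x$, isomorphic to $\slice{\cat{C}}{x}$; under this identification $\mathrm{dom}$ becomes the diagonal $\Delta_{x}\colon\slice{\cat{C}}{x}\to\pararr{\cat{C}}{x}$, $(h\colon y\to x)\mapsto(h,h\colon y\to x)$, and a morphism $a\colon x\to x'$ acts by post-composition with $a$ on both $\slice{\cat{C}}{x}$ and $\pararr{\cat{C}}{x}$, the corresponding square commuting because both legs send $h$ to $(h\Cp a,h\Cp a)$. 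This proves (1) for $i=1$.

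For (2), a functor $\fun{F}\colon\cat{C}\to\cat{D}$ acts pointwise on all of these ingredients: applying $\fun{F}$ throughout yields functors $\slice{\cat{C}}{x}\to\slice{\cat{D}}{\fun{F}x}$ and $\pararr{\cat{C}}{x}\to\pararr{\cat{D}}{\fun{F}x}$ that commute with $\mathrm{dom}$, with post-composition, and with the diagonals, and that send $(\idd{x},\idd{x})$ to $(\idd{\fun{F}x},\idd{\fun{F}x})$; assembled together these constitute a natural transformation $\fun{S}^{(i)}_{\cat{C}}\Rightarrow\fun{S}^{(i)}_{\cat{D}}\circ\fun{F}$ in the functor category $[\cat{C},\catcat^{\wkarr}]$, and $\pi_{i}(\fun{F})$ is its whiskering with $\fun{H}$. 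Finally, (3) holds because the action of functors on slices and on parallel-arrow categories is strictly functorial, and whiskering with a fixed functor is strictly functorial in the whiskered transformation; hence $\pi_{i}(\fun{F}\Cp\fun{G})=\pi_{i}(\fun{F})\Cp\pi_{i}(\fun{G})$ and $\pi_{i}(\idd{\cat{C}})=\idd{\dhom{i}{\cat{C}}{-}}$. I expect the only genuine obstacle to be the purely formal $2$-categorical bookkeeping in (3) --- keeping the order of horizontal and vertical $2$-cell composites straight so that one obtains strict equalities rather than mere isomorphisms --- since all the categorical content is already forced by the definitions. A tidy way to hide this bookkeeping is to observe that $\cat{C}\mapsto\fun{S}^{(i)}_{\cat{C}}$ is a strict section of the projection to $\catcat$ from the (strict) $2$-category of categories equipped with a functor to $\catcat^{\wkarr}$, whence $\pi_{i}=(\fun{H}\circ(-))\circ\fun{S}^{(i)}_{(-)}$ inherits strict functoriality.
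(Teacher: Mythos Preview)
Your argument is correct and essentially self-contained: you factor $\dhom{i}{\cat{C}}{-}$ as $\fun{H}\circ\fun{S}^{(i)}_{\cat{C}}$ with $\fun{S}^{(i)}_{\cat{C}}$ built by hand from post-composition on slices and parallel-arrow categories, and then verify directly that $\fun{S}^{(i)}$ is strictly functorial in both $x$ and $\cat{C}$. The identification $\slice{\pararr{\cat{C}}{x}}{(\idd{x},\idd{x})}\cong\slice{\cat{C}}{x}$ that you use for $i=1$ is exactly the one the paper also invokes (via Proposition~\ref{prop: subterminal as weak terminal parallel arrow}).

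The paper, however, takes a deliberately more abstract route. It develops an auxiliary notion of \emph{past extension} $\imath\colon\cat{A}\hookrightarrow\cat{B}$ and an indexed category $\extfun{\cat{A}}{\cat{C}}\colon\opp{\pastext{\cat{A}}}\times\cat{C}^{\cat{A}}\to\catcat$ of extensions of functors along past extensions, with covariance in $\cat{C}$. Both homotopy posets are then recovered from a single ``general functoriality pattern'': for $\cat{A}=\Term$ and suitable morphisms $\fun{K_0},\fun{K_1}$ in $\pastext{\Term}$ (the inclusion of endpoints of $\wkarr$, respectively the collapse of the walking parallel pair onto $\wkarr$), the curried functor $\Lambda.\extfun{\Term}{\cat{C}}(\fun{K_i},-)\colon\cat{C}\to\catcat^{\wkarr}$ is shown to coincide, up to isomorphism, with your $\fun{S}^{(i)}_{\cat{C}}$. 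What this buys the paper is a uniform machine: functoriality in $x$ and covariance in $\cat{C}$ are proved once at the level of $\extfun{\cat{A}}{\cat{C}}$ and then specialised, and the same machinery is positioned to handle further variants (the paper alludes to a unifying fibrational perspective to appear later). What your approach buys is transparency and economy: no new definitions, and the verifications reduce to the evident identities $(a\Cp b)_{*}=a_{*}\Cp b_{*}$ and $\fun{F}(f\Cp a)=\fun{F}f\Cp\fun{F}a$. Both proofs ultimately rest on the same concrete isomorphisms, so they agree on content; they differ in how much scaffolding is erected around that content.
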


A concise way of packaging this information is to say that $\pi_i$ defines a functor from $\catcat$ to the \emph{lax slice} $\laxslice{\lcatcat}{\pointed{\catpos}}$, where $\lcatcat$ is the ``huge'' category of possibly large categories.
The objects of the lax slice are pairs of a possibly large category $\lcat{C}$ and a functor $\lcat{C} \to \pointed{\catpos}$, and the morphisms are triangles of functors commuting up to a natural transformation.
Indeed, given $\fun{F}\colon \cat{C} \to \cat{D}$, we have a triangle
\begin{equation*}
\begin{tikzcd}
	{\cat{C}} &&& \pointed{\catpos} \\
	\\
	{\cat{D}}
	\arrow["{\fun{F}}"', from=1-1, to=3-1]
	\arrow["{\dhom{i}{\cat{D}}{-}}"', from=3-1, to=1-4]
	\arrow[""{name=0, anchor=center, inner sep=0}, "{\dhom{i}{\cat{C}}{-}}", from=1-1, to=1-4]
	\arrow["{\pi_i(\fun{F})}"', shorten <=17pt, shorten >=26pt, Rightarrow, from=0, to=3-1]
\end{tikzcd}
\end{equation*}
commuting up to the natural transformation $\pi_i(\fun{F})$.
\begin{remark}[Dual invariants] \label{rmk: Dual invariants}
As usual, all the constructions can be dualised to $\opp{\cat{C}}$.
This will replace the slice over an object and its domain opfibration with the slice under an object and its codomain fibration, producing invariants classifying obstructions to \emph{initiality} of the object.
\end{remark}

    \section{Obstructions to a morphism being iso} \label{sec: obstructions}

As remarked in the Introduction, one of our main motivations for introducing homotopy posets was measuring how far a generic morphism is from being iso.
Just as we could separate obstructions to terminality into obstructions to weak terminality and subterminality, we can separate obstructions to a morphism being iso into obstructions to a morphism being split epi and mono, respectively.
\begin{proposition}\label{prop: spit epi weak term mono subterm}
    Let $f\colon X \to Y$ be a morphism in a category $\CategoryC$. Then:
    \begin{itemize}
        \item $f$ is split epi in $\CategoryC$ if and only if $f$ is weak terminal in $\slice{\CategoryC}{Y}$,
        \item $f$ is mono in $\CategoryC$ if and only if $f$ is subterminal in $\slice{\CategoryC}{Y}$.
    \end{itemize}
\end{proposition}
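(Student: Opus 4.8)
The plan is to unfold both statements directly from the definitions of the slice category and of weak terminality and subterminality (\autoref{def: weak and subterminal}), without invoking any of the homotopy-poset machinery. Recall that an object of $\slice{\CategoryC}{Y}$ is a morphism $g\colon Z \to Y$, and a morphism from $g\colon Z \to Y$ to $h\colon W \to Y$ is a morphism $k\colon Z \to W$ of $\CategoryC$ with $k \Cp h = g$; the object under analysis is $f\colon X \to Y$ itself.

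\textbf{Split epi iff weak terminal.} For the forward direction, I would suppose $f$ has a section $s\colon Y \to X$, i.e.\ $s \Cp f = \idd{Y}$. Given any object $g\colon Z \to Y$ of $\slice{\CategoryC}{Y}$, the composite $g \Cp s\colon Z \to X$ satisfies $(g \Cp s) \Cp f = g \Cp (s \Cp f) = g$, so it is a morphism $g \to f$ in the slice; hence $f$ is weak terminal. Conversely, if $f$ is weak terminal, then in particular there is a morphism from the object $\idd{Y}\colon Y \to Y$ to $f$, that is, a morphism $s\colon Y \to X$ with $s \Cp f = \idd{Y}$, so $f$ is split epi.

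\textbf{Mono iff subterminal.} If $f$ is mono and $k_0, k_1\colon g \to f$ are parallel morphisms in the slice — so $k_0, k_1\colon Z \to X$ with $k_0 \Cp f = g = k_1 \Cp f$ — then $k_0 \Cp f = k_1 \Cp f$ forces $k_0 = k_1$; hence $f$ is subterminal. Conversely, if $f$ is subterminal and $k_0, k_1\colon Z \to X$ satisfy $k_0 \Cp f = k_1 \Cp f$, I would set $g \eqdef k_0 \Cp f$; then $k_0$ and $k_1$ are both morphisms $g \to f$ in $\slice{\CategoryC}{Y}$, so subterminality gives $k_0 = k_1$, and $f$ is mono.

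I expect no real obstacle: the argument is pure bookkeeping. The only points requiring a little care are keeping the direction of morphisms in the slice straight and respecting the diagrammatic composition convention for $\Cp$, and, for the first bullet, noticing that $\idd{Y}$ — which is in fact the terminal object of $\slice{\CategoryC}{Y}$ — is the right test object; this reflects the general fact that in a category with a terminal object, an object is weak terminal exactly when the terminal object admits a morphism into it, while an object is subterminal exactly when its unique morphism to the terminal object is mono. One could alternatively package the proof via these two general lemmas applied to $\slice{\CategoryC}{Y}$, but the direct verification above is shorter.
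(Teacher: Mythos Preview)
Your argument is correct in both parts: the forward and converse directions for split epi and for mono are exactly the standard unfolding of the slice definitions, and you have handled the diagrammatic composition convention consistently. The paper does not actually include a proof of this proposition in the body or the Appendix (it is treated as a well-known characterisation, with details deferred to the extended version), so there is nothing to compare against; your direct verification is precisely the expected argument and would serve perfectly well as the omitted proof.
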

\begin{corollary}\label{cor: spit epi iff dhom0 trivial mono iff dhom1 trivial}
    Let $f: X \to Y$ be a morphism in a category $\CategoryC$. Then:
    \begin{itemize}
        \item $f$ is split epi if and only if $\dhom{0}{(\slice{\cat{C}}{Y})}{f}$ is trivial;
        \item $f$ is mono if and only if $\dhom{1}{(\slice{\cat{C}}{Y})}{f}$ is trivial, and:
        \item $f$ is iso if and only if both $\dhom{0}{(\slice{\cat{C}}{Y})}{f}$ and $\dhom{1}{(\slice{\cat{C}}{Y})}{f}$ are trivial.
    \end{itemize}
\end{corollary}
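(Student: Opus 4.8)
The proof will be a direct assembly of results already established, so the plan is simply to chain the relevant biconditionals in the right order.

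First I would rewrite the two hypotheses on $f$ as statements internal to the slice category $\slice{\cat{C}}{Y}$. By \autoref{prop: spit epi weak term mono subterm}, $f$ is split epi in $\cat{C}$ exactly when $f$, regarded as an object of $\slice{\cat{C}}{Y}$, is weak terminal there, and $f$ is mono in $\cat{C}$ exactly when that same object is subterminal there.

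Next I would feed these into the intrinsic characterisations of the homotopy posets. Applying \autoref{prop:dhom0_trivial_when_weak_terminal} with the ambient category taken to be $\slice{\cat{C}}{Y}$ and the distinguished object taken to be $f$ shows that $f$ is weak terminal in $\slice{\cat{C}}{Y}$ if and only if $\dhom{0}{(\slice{\cat{C}}{Y})}{f} = \{[f]\}$; similarly \autoref{prop:dhom1_trivial_when_subterminal} shows that $f$ is subterminal in $\slice{\cat{C}}{Y}$ if and only if $\dhom{1}{(\slice{\cat{C}}{Y})}{f} = \{[f]\}$. Composing with the previous paragraph yields the first two bullet points, where ``trivial'' is read as ``equal to the singleton poset on its basepoint'', which is precisely the shape of the right-hand sides above. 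For the last bullet point I would invoke \autoref{prop: iso if terminal in slice} to rephrase ``$f$ is iso'' as ``$f$ is terminal in $\slice{\cat{C}}{Y}$'', and then apply \autoref{prop:dhoms_trivial_when_terminal}, again with ambient category $\slice{\cat{C}}{Y}$ and distinguished object $f$, to conclude that this holds iff both homotopy posets are trivial.

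I do not expect any genuine obstacle here: every step is an instance of a proposition already proved for an arbitrary category and an arbitrary object in it, so the only thing to check is that the substitution ``work in $\slice{\cat{C}}{Y}$, at the object $f$'' is legitimate, which it is. As a consistency check one may also observe that the third bullet follows formally from the first two, since a morphism that is both mono and split epi is automatically iso (a section of a monomorphism is automatically a two-sided inverse); I would mention this but keep the terminal-object route in the main line of argument, for uniformity with the rest of the section.
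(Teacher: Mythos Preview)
Your proposal is correct and matches the paper's intended argument: the corollary is stated without proof precisely because it is the direct combination of \autoref{prop: spit epi weak term mono subterm} with \autoref{prop:dhom0_trivial_when_weak_terminal}, \autoref{prop:dhom1_trivial_when_subterminal}, and \autoref{prop:dhoms_trivial_when_terminal} (via \autoref{prop: iso if terminal in slice}), exactly as you lay out. Your additional remark that the third item also follows from the first two because mono plus split epi implies iso is a nice consistency check, though not needed.
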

Furthermore, when the homotopy posets associated to a morphism $f$ are not trivial, they give us precise information about why $f$ fails to be split epi and mono.

To make this more concrete, let us spell out precisely how to compute the invariants associated to a function between sets, where split epi (assuming choice) means \emph{surjective} and mono means \emph{injective}.
This amounts to calculating $\dhom{0}{(\slice{\Set}{Y})}{f}$ and $\dhom{1}{(\slice{\Set}{Y})}{f}$ for some function $f\colon X \to Y$. 
\begin{proposition}\label{prop: dhom0 for set/y}
    Let $f\colon X \to Y$ be a function between sets. $\posref{\slice{\catset}{Y}}$ is isomorphic, as a poset, to the power set $\powerset{Y}$, via the assignment $(S \subseteq Y) \mapsto \posref{\imath_S}$, where $\imath_S$ is the injective function including $S$ into $Y$.
    Through this bijection, $\posref{f}$ corresponds to the image $f(X)$ of $f$.
\end{proposition}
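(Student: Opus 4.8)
The plan is to unravel $\posref{\slice{\catset}{Y}}$ explicitly in terms of images of functions into $Y$.

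First I would record the following elementary factorisation fact: for functions $g\colon A \to Y$ and $h\colon B \to Y$, there is a morphism $g \to h$ in $\slice{\catset}{Y}$ --- that is, a function $k\colon A \to B$ with $k \Cp h = g$ --- if and only if $g(A) \subseteq h(B)$. The ``only if'' direction is immediate, since $g(A) = (k\Cp h)(A) \subseteq h(B)$. For ``if'', assuming $g(A) \subseteq h(B)$, for each $a \in A$ the value $g(a)$ lies in $h(B)$, so one may choose (here the axiom of choice is used, consistently with the standing assumption under which split epis are surjections) some $b_a \in B$ with $h(b_a) = g(a)$, and set $k(a) \eqdef b_a$; the case $A = \emptyset$ is trivial, as then $g$ is the empty function and $g(A) = \emptyset \subseteq h(B)$ automatically.

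By the explicit description of poset reflection in \autoref{def: poset reflection}, this says exactly that $\posref{g} \leq \posref{h}$ in $\posref{\slice{\catset}{Y}}$ iff $g(A) \subseteq h(B)$, and hence $\posref{g} = \posref{h}$ iff $g(A) = h(B)$. Consequently the assignment $\posref{g} \mapsto g(A)$ is a well-defined map $\posref{\slice{\catset}{Y}} \to \powerset{Y}$ (by the forward implication) that is injective (by the reverse implication) and both order-preserving and order-reflecting. It is surjective because any $S \subseteq Y$ equals $\imath_S(S)$ for the inclusion $\imath_S\colon S \incl Y$; so it is an isomorphism of posets, with inverse $S \mapsto \posref{\imath_S}$, which is precisely the stated assignment. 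Finally, applying this isomorphism to $\posref{f}$ yields the image $f(X)$, as claimed.

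I do not expect a genuine obstacle here: the only points requiring care are the edge case of functions out of the empty set and the (harmless, and already standing) appeal to choice in the factorisation fact; everything else is bookkeeping.
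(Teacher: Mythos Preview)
Your argument is correct: the key factorisation fact (a morphism $g \to h$ in $\slice{\catset}{Y}$ exists iff $g(A) \subseteq h(B)$) immediately gives that the poset reflection is ordered by inclusion of images, and the rest follows. The paper does not include a proof of this proposition --- it is deferred to the extended version --- so there is nothing to compare against directly, but your approach is the natural one and almost certainly what the authors have in mind.
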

Using this correspondence and quotienting by the lower set of $f(X)$, which contains in particular $\varnothing$, we may identify $\dhom{0}{(\slice{\catset}{Y})}{f}$ with the subposet of $\powerset{Y}$ whose elements are either $\varnothing$ or subsets of $Y$ that contain at least one element $y \notin f(X)$.
The ``minimal obstructions'', that is, the minimal elements in the complement of the basepoint, are the singletons $\{y\}$ with $y \in Y \setminus f(X)$.
This poset is trivial if and only if $f(X) = Y$, that is, iff $f$ is surjective.
\begin{example}
    Let $f\colon \{0,1\} \to \{0,1,2,3\}$ be the function mapping $0 \mapsto 0$ and $1 \mapsto 1$. 
    The homotopy poset $\dhom{0}{(\slice{\catset}{\{0,1,2,3\}})}{f}$ has the following structure:
    \begin{equation*}
    \def\interval{1.75}
        \scalebox{0.75}{
        \begin{tikzpicture}
            \node(4) at (0,4*\interval) {$\{0,1,2,3\}$};
            \node (3a) at (-6,3*\interval) {$\{0,1,2\}$};
            \node (3b) at (-2,3*\interval) {$\{0,2,3\}$};
            \node (3c) at (2,3*\interval) {$\{1,2,3\}$};
            \node (3d) at (6,3*\interval) {$\{0,1,3\}$};

            \node (2a) at (-8,2*\interval) {$\{0,2\}$};
            \node (2b) at (-4,2*\interval) {$\{1,2\}$};
            \node (2c) at (0,2*\interval) {$\{2,3\}$};
            \node (2d) at (4,2*\interval) {$\{0,3\}$};
            \node (2e) at (8,2*\interval) {$\{1,3\}$};

            \node (1a) at (-4,1*\interval) {$\{2\}$};
            \node (1b) at (4,1*\interval) {$\{3\}$};

        \node (0) at (0,0) {$\varnothing$};
            
            \draw[thick] (3a) -- (4);
            \draw[thick] (3b) -- (4);
            \draw[thick] (3c) -- (4);
            \draw[thick] (3d) -- (4);

            \draw[thick] (2a) -- (3a);
            \draw[thick] (2a) -- (3b);

            \draw[thick] (2b) -- (3a);
            \draw[thick] (2b) -- (3c);

            \draw[thick] (2c) -- (3b);
            \draw[thick] (2c) -- (3c);

            \draw[thick] (2d) -- (3b);
            \draw[thick] (2d) -- (3d);

            \draw[thick] (2e) -- (3c);
            \draw[thick] (2e) -- (3d);

            \draw[thick] (1a) -- (2a);
            \draw[thick] (1a) -- (2b);
            \draw[thick] (1a) -- (2c);

            \draw[thick] (1b) -- (2c);
            \draw[thick] (1b) -- (2d);
            \draw[thick] (1b) -- (2e);

            \draw[thick] (0) -- (1a);
            \draw[thick] (0) -- (1b);
        \end{tikzpicture}
        }
    \end{equation*}
    The minimal obstructions $\{2\}$ and $\{3\}$ are in bijection with the elements not in the image of $f$.
\end{example}
\begin{proposition}\label{prop: dhom1 for set/y}
    Let $X \times_f X$ be the pullback of $f$ along itself --- that is, the set $\{(x_0, x_1) \mid f(x_0) = f(x_1)\}$ --- and let $p_f\colon X \times_f X \to Y$ be the function $(x_0, x_1) \mapsto f(x_0) = f(x_1)$. Then:
    \begin{enumerate}
        \item $\posref{\pararr{(\slice{\catset}{Y})}{f}}$ is isomorphic to $\powerset{(X \times_f X)}$ via the assignment $(S \subseteq X \times_f X) \mapsto \posref{(\restr{p_0}{S}, \restr{p_1}{S})}$, where $\restr{p_i}{S}$ are the projections $X \times_f X \to Y$, restricted to $S$, seen as morphisms $\restr{p_f}{S} \to f$ in $\posref{\pararr{(\slice{\catset}{Y})}{f}}$;
        \item through this bijection, $\posref{(\idd{f}, \idd{f})}$ is identified with the diagonal $\Delta X$.
    \end{enumerate}
\end{proposition}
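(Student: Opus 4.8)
The plan is to reduce claim (1) to \autoref{prop: dhom0 for set/y} by exhibiting an isomorphism of categories between $\pararr{(\slice{\catset}{Y})}{f}$ and the single slice $\slice{\catset}{(X\times_f X)}$, and then to read off claim (2) from the explicit form of that isomorphism.

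First I would unwind \autoref{def: category of parallel arrows} in the case $\CategoryC = \slice{\catset}{Y}$ and $x = f$. An object is a pair $(g_0, g_1\colon h \to f)$ of morphisms of $\slice{\catset}{Y}$, that is, a function $h\colon Z\to Y$ together with functions $g_0, g_1\colon Z\to X$ satisfying $g_0\Cp f = h = g_1\Cp f$; by the universal property of the pullback $X\times_f X$ (with projections $p_0, p_1$ and $p_0\Cp f = p_f = p_1\Cp f$), this is precisely the data of a single function $\langle g_0, g_1\rangle\colon Z\to X\times_f X$, from which $h = \langle g_0, g_1\rangle\Cp p_f$ is recovered. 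Similarly a morphism $(g_0, g_1\colon h\to f)\to (g_0', g_1'\colon h'\to f)$ is a function $k\colon Z\to Z'$ with $g_i = k\Cp g_i'$ for $i=0,1$, which is exactly a function $k$ with $\langle g_0, g_1\rangle = k\Cp\langle g_0', g_1'\rangle$, i.e.\ a morphism of $\slice{\catset}{(X\times_f X)}$. These correspondences are strictly functorial and mutually inverse, so they assemble into an isomorphism of categories
\[ \pararr{(\slice{\catset}{Y})}{f} \;\cong\; \slice{\catset}{(X\times_f X)}, \]
under which $(g_0, g_1\colon h\to f)$ corresponds to the object $\langle g_0, g_1\rangle$, and in particular $(\restr{p_0}{S}, \restr{p_1}{S})$ attached to a subset $S\subseteq X\times_f X$ corresponds to the inclusion $\imath_S\colon S\incl X\times_f X$.

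Next I would apply \autoref{prop: dhom0 for set/y} with $Y$ replaced by $X\times_f X$. It gives an isomorphism of posets $\posref{\slice{\catset}{(X\times_f X)}}\cong\powerset{(X\times_f X)}$ under which $\posref{\imath_S}\mapsto S$, and hence (since $\posref{q} = \posref{\imath_{q(Z)}}$ for any $q\colon Z\to X\times_f X$) the class of a general object $q$ maps to its image $q(Z)$. Composing with the isomorphism of the previous paragraph yields claim (1): $\posref{\pararr{(\slice{\catset}{Y})}{f}}\cong\powerset{(X\times_f X)}$ with $\posref{(\restr{p_0}{S}, \restr{p_1}{S})}\mapsto S$ and, in general, $\posref{(g_0, g_1)}\mapsto \langle g_0, g_1\rangle(Z) = \{(g_0(z), g_1(z))\mid z\in Z\}$. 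For claim (2), the object $(\idd{f}, \idd{f})$ of $\pararr{(\slice{\catset}{Y})}{f}$ has $Z = X$, $h = f$ and $g_0 = g_1 = \idd{X}$, so it is carried to $\langle\idd{X}, \idd{X}\rangle\colon X\to X\times_f X$, $x\mapsto (x,x)$, whose image is the diagonal $\Delta X$; thus $\posref{(\idd{f},\idd{f})}$ is identified with $\Delta X$.

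The substantive content is confined to the identification of categories, which is a routine application of the pullback's universal property; in particular, the one use of the axiom of choice — needed so that $\posref{q}\le\posref{q'}$ whenever $q(Z)\subseteq q'(Z')$, which is what makes the poset reflection of a slice of $\catset$ collapse onto the poset of images — is already absorbed into \autoref{prop: dhom0 for set/y}, consistently with the running convention that ``split epi means surjective''. I expect the only real obstacle to be notational bookkeeping: keeping the two nested slices in $\pararr{(\slice{\catset}{Y})}{f}$ apart and tracking the diagrammatic composition order $\Cp$ when checking the universal-property bijection on morphisms. One can alternatively argue directly, mirroring the proof of \autoref{prop: dhom0 for set/y} — send $(g_0,g_1\colon h\to f)$ to the image of $\langle g_0,g_1\rangle$, verify monotonicity in both directions (choice entering in the backward direction), and check that $S\mapsto(\restr{p_0}{S},\restr{p_1}{S})$ is a two-sided inverse — but the reduction makes this unnecessary.
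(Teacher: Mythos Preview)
The paper does not include a proof of this proposition (it is among those deferred to the extended version), so there is nothing to compare your argument against directly. That said, your proof is correct and is the natural one: the universal property of the kernel pair $X\times_f X$ gives an isomorphism of categories $\pararr{(\slice{\catset}{Y})}{f}\cong\slice{\catset}{(X\times_f X)}$, and then \autoref{prop: dhom0 for set/y} applied to $X\times_f X$ yields part~(1), with part~(2) read off from where the diagonal map lands. Your bookkeeping is accurate, including the observation that the slice condition $k\Cp h' = h$ on morphisms is automatic once $g_i = k\Cp g_i'$ holds.
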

Using this correspondence, we may identify $\dhom{1}{\catset}{X}$ with the subposet of $\powerset{(X \times_f X)}$ whose elements are either $\varnothing$, or contain at least one pair $(x_0, x_1)$ such that $x_0 \neq x_1$.
This poset is trivial if and only if $f$ is injective. 
Notice that the minimal obstructions to injectiveness of $f$ are in bijection with pairs $(x_0, x_1)$ where $x_0 \neq x_1$ but $f(x_0) = f(x_1)$.
\begin{example}
    Let $f: \{0,1\} \to \{*\}$ be the function mapping $0 \mapsto *$, $1 \mapsto *$. Then $\{0,1\} \times_f \{0,1\}$ is the set \{(0,0),(0,1),(1,0),(1,1)\}, and $\dhom{1}{(\slice{\catset}{\{*\}})}{f}$ has the following structure:
    \begin{equation*}
    \def\interval{1.75}
        \scalebox{0.75}{
        \begin{tikzpicture}
            \node (4a) at (0,4*\interval) {$\{(0,0),(0,1),(1,0),(1,1)\}$};

            \node (3a) at (-6,3*\interval) {$\{(0,0),(0,1),(1,1)\}$};
            \node (3b) at (-2,3*\interval) {$\{(0,1),(1,0),(1,1)\}$};
            \node (3c) at (2,3*\interval) {$\{(0,0),(0,1),(1,0)\}$};
            \node (3d) at (6,3*\interval) {$\{(0,0),(1,0),(1,1)\}$};

            \node (2a) at (-8,2*\interval) {$\{(1,1),(0,1)\}$};
            \node (2b) at (-4,2*\interval) {$\{(0,0),(0,1)\}$};
            \node (2c) at (0,2*\interval) {$\{(0,1),(1,0)\}$};
            \node (2d) at (4,2*\interval) {$\{(1,1),(1,0)\}$};
            \node (2e) at (8,2*\interval) {$\{(0,0),(1,0)\}$};

            \node (1a) at (-4,1*\interval) {$\{(0,1)\}$};
            \node (1b) at (4,1*\interval) {$\{(1,0)\}$};

            \node (0) at (0,0) {$\varnothing$};

            \draw[thick] (3a) -- (4a);
            \draw[thick] (3b) -- (4a);
            \draw[thick] (3c) -- (4a);
            \draw[thick] (3d) -- (4a);
            
            \draw[thick] (2a) -- (3a);
            \draw[thick] (2a) -- (3b);
            \draw[thick] (2b) -- (3a);
            \draw[thick] (2b) -- (3c);
            \draw[thick] (2c) -- (3b);
            \draw[thick] (2c) -- (3c);
            \draw[thick] (2d) -- (3b);
            \draw[thick] (2d) -- (3d);
            \draw[thick] (2e) -- (3d);
            \draw[thick] (2e) -- (3c);

            \draw[thick] (1a) -- (2a);
            \draw[thick] (1a) -- (2b);
            \draw[thick] (1a) -- (2c);
            \draw[thick] (1b) -- (2c);
            \draw[thick] (1b) -- (2d);
            \draw[thick] (1b) -- (2e);

            \draw[thick] (0) -- (1a);
            \draw[thick] (0) -- (1b);
        \end{tikzpicture}
        }
    \end{equation*}
Notice that, via the isomorphism $\Set \simeq \slice{\Set}{\{*\}}$, this is isomorphic to $\dhom{1}{\Set}{\{0, 1\}}$.
\end{example}
To conclude, suppose that two morphisms are both components of the same natural transformation.
Is there a relation between the associated invariants?
The following result answers this question in the affirmative.
\begin{proposition}[Covariance over the domain of a natural transformation] \label{prop: covariance natural transformation}
Let $\fun{F}, \fun{G}\colon \cat{C} \to \cat{D}$ be two functors and let $\alpha\colon \fun{F} \Rightarrow \fun{G}$ be a natural transformation.
For all $i \in \{ 0, 1\}$, the assignment
\begin{equation*}
    x \; \mapsto \; \dhom{i}{(\slice{\cat{D}}{\fun{G}{x}})}{\alpha_x}
\end{equation*}
extends to a functor $\cat{C} \to \pointed{\catpos}$.
\end{proposition}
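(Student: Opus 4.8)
The plan is to realise the desired functor as a composite $\cat{C} \to \lcatcat_* \to \pointed{\catpos}$, where $\lcatcat_*$ is the category of \emph{pointed} (possibly large) categories: an object is a pair $(\cat{E}, e)$ with $e$ an object of $\cat{E}$; a morphism $(\cat{E}, e) \to (\cat{E}', e')$ is a pair $(\fun{H}, \eta)$ of a functor $\fun{H}\colon \cat{E} \to \cat{E}'$ and a morphism $\eta\colon \fun{H}e \to e'$ in $\cat{E}'$; and composition is given by $(\fun{H}, \eta) \Cp (\fun{H}', \eta') \eqdef (\fun{H} \Cp \fun{H}',\ \fun{H}'(\eta) \Cp \eta')$. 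Its objects are exactly the pairs that the homotopy-poset assignment takes as input, and the first task is to check that this assignment organises into a functor $\Pi_i\colon \lcatcat_* \to \pointed{\catpos}$: set $\Pi_i(\cat{E}, e) \eqdef \dhom{i}{\cat{E}}{e}$, and send $(\fun{H}, \eta)$ to the composite of $\pi_i(\fun{H})_e\colon \dhom{i}{\cat{E}}{e} \to \dhom{i}{\cat{E}'}{\fun{H}e}$ --- a component of the natural transformation of part (2) of \autoref{prop: Homotopy posets are functorial} --- with the image of $\eta$ under the functor $\dhom{i}{\cat{E}'}{-}$ of part (1). Preservation of identities is immediate, and preservation of composition reduces, after expanding along $\pi_i(\fun{H} \Cp \fun{H}') = \pi_i(\fun{H}) \Cp \pi_i(\fun{H}')$ and functoriality of $\dhom{i}{\cat{E}''}{-}$, to exactly the naturality square of $\pi_i(\fun{H}')$ at the morphism $\eta$.

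The substantive step is to build a functor $\Phi_\alpha\colon \cat{C} \to \lcatcat_*$ with $\Phi_\alpha(x) \eqdef (\slice{\cat{D}}{\fun{G}x}, \alpha_x)$. For $h\colon x \to x'$ in $\cat{C}$, postcomposition along $\fun{G}h\colon \fun{G}x \to \fun{G}x'$ is a functor $(\fun{G}h)_*\colon \slice{\cat{D}}{\fun{G}x} \to \slice{\cat{D}}{\fun{G}x'}$; the key observation is that naturality of $\alpha$ gives $(\fun{G}h)_*(\alpha_x) = \alpha_x \Cp \fun{G}h = \fun{F}h \Cp \alpha_{x'}$, so that $\fun{F}h$ is itself a morphism $\fun{F}h \Cp \alpha_{x'} \to \alpha_{x'}$ in $\slice{\cat{D}}{\fun{G}x'}$. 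We therefore set $\Phi_\alpha(h) \eqdef ((\fun{G}h)_*, \fun{F}h)$. Identities are preserved on the nose; for $h\colon x \to x'$ and $k\colon x' \to x''$ we have $(\fun{G}h)_* \Cp (\fun{G}k)_* = (\fun{G}(h \Cp k))_*$ by functoriality of $\fun{G}$ and of postcomposition, while the arrow components compose to $(\fun{G}k)_*(\fun{F}h) \Cp \fun{F}k = \fun{F}h \Cp \fun{F}k = \fun{F}(h \Cp k)$, the first equality re-identifying source and target via naturality of $\alpha$; hence $\Phi_\alpha(h \Cp k) = \Phi_\alpha(h) \Cp \Phi_\alpha(k)$. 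The functor sought is then $\Phi_\alpha \Cp \Pi_i$: on objects it is $x \mapsto \dhom{i}{(\slice{\cat{D}}{\fun{G}x})}{\alpha_x}$ as required, and on a morphism $h$ it is $\pi_i((\fun{G}h)_*)_{\alpha_x}$ followed by the image of $\fun{F}h$ under $\dhom{i}{(\slice{\cat{D}}{\fun{G}x'})}{-}$.

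I expect the only real obstacle to be conceptual bookkeeping rather than any hard step: the one genuinely nontrivial idea is that $(\fun{G}h)_*$ carries $\alpha_x$ not to $\alpha_{x'}$ but to $\fun{F}h \Cp \alpha_{x'}$, which nevertheless maps canonically to $\alpha_{x'}$ via $\fun{F}h$; after that, everything is repeated application of the functor and naturality laws of \autoref{prop: Homotopy posets are functorial} together with naturality of $\alpha$. A minor caveat: when $\cat{D}$ is large the slices $\slice{\cat{D}}{\fun{G}x}$, and hence their homotopy posets, are large, so $\pointed{\catpos}$ and $\lcatcat_*$ should be read at the size scale used in the $\lcatcat$ discussion following \autoref{prop: Homotopy posets are functorial}.
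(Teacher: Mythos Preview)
Your argument is correct. The key observation that the full content of \autoref{prop: Homotopy posets are functorial} --- not only the functors $\dhom{i}{\cat{E}}{-}$ but also their compatibility under $\pi_i(\fun{H})$ --- assembles into a single functor $\Pi_i$ out of the ``lax-pointed'' category $\lcatcat_*$ is exactly right, and the verification that $\Phi_\alpha$ is a functor is clean and complete. Your category $\lcatcat_*$ is the total category of the identity on $\lcatcat$ (equivalently, the domain of the universal opfibration), and $\Pi_i$ is precisely the unstraightening of the functor $\catcat \to \laxslice{\lcatcat}{\pointed{\catpos}}$ described immediately after \autoref{prop: Homotopy posets are functorial}; you are making that packaging explicit.

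This is genuinely different from the route the paper takes. The paper does not prove \autoref{prop: covariance natural transformation} here; it remarks that the result ``does not arise from the general functoriality result by pre-composition with another functor'' and defers the proof to an extended technical paper, hinting at a unified treatment via (co)fibrations --- the ``past extension'' machinery developed in the Appendix for \autoref{prop: Homotopy posets are functorial}. Your approach shows that, while the result is indeed not obtained by \emph{mere} precomposition with a functor into some fixed category, it \emph{is} a formal consequence of \autoref{prop: Homotopy posets are functorial} once one passes through $\lcatcat_*$. What the paper's intended approach buys is uniformity: both \autoref{prop: Homotopy posets are functorial} and \autoref{prop: covariance natural transformation} become instances of a single $\extfun{\cat{A}}{\cat{C}}$ construction. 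What your approach buys is economy: it treats \autoref{prop: Homotopy posets are functorial} as a black box and avoids re-running any extension analysis. One small note: your $\lcatcat_*$ is not the paper's $\pointed{\catcat}$ (the strict coslice under $\Term$), so it would be worth flagging the distinction in notation if this were inserted into the text.
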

Notice that this is \emph{not} simply a consequence of \autoref{prop: Homotopy posets are functorial}, that is, it does not arise from the general functoriality result by pre-composition with another functor.\footnote{There is a unifying perspective on the two functoriality results, involving the theory of fibrations and cofibrations of categories; this will be discussed in an extended technical paper.}
It implies that we can naturally map obstructions for $\alpha_x$ to obstructions for $\alpha_y$ along a morphism $f\colon x \to y$ in $\cat{C}$; we can think of morphisms in $\cat{C}$ as inducing a ``flow'' of obstructions to the components of $\alpha$, under which a non-trivial obstruction may be trivialised, but it can never be the case that a non-obstruction is ``un-trivialised''.

    \section{Qualifying compositionality} \label{sec: qualifying}

Now let $\fun{P}\colon \cat{C} \to \cat{D}$ be a \emph{lax} functor of \emph{bicategories}.
This means that, for all triples of objects $X, Y, Z$ in $\cat{C}$, we have two functors
\begin{equation*}
    (\fun{P}-) \Cp (\fun{P}-), \; \fun{P}(- \Cp -)\colon \homset{\cat{C}}{X}{Y} \times \homset{\cat{C}}{Y}{Z} \to \homset{\cat{D}}{\fun{P}X}{\fun{P}Z}
\end{equation*}
connected by a natural transformation, the \emph{laxator} $\varphi\colon (\fun{P}-) \Cp (\fun{P}-) \Rightarrow \fun{P}(- \Cp -)$.\footnote{Technically, the laxators are a family of natural transformations indexed by $X, Y, Z$, but we will leave the indexing implicit.}
As a special case, when $\cat{C}$ and $\cat{D}$ are monoidal categories seen as one-object bicategories, $\fun{P}$ is a lax monoidal functor, and the laxator is a natural transformation $(\fun{P}-) \otimes (\fun{P}-) \Rightarrow \fun{P}(- \otimes -)$.

By Proposition \ref{prop: covariance natural transformation}, we obtain functors $\homset{\cat{C}}{X}{Y} \times \homset{\cat{C}}{Y}{Z} \to \pointed{\catpos}$
sending a pair of morphisms $(f\colon X \to Y, g\colon Y \to Z)$ to the homotopy posets
\begin{equation*}
    \dhom{i}
    {(\slice{\homset{\cat{D}}{\fun{P}X}{\fun{P}Z}}{\fun{P}(f\Cp g)})}
    {\varphi_{f,g}}
\end{equation*}
associated to the component $\varphi_{f,g}$ of the laxator.

In the scenario sketched in the Introduction, the failure of $\varphi_{f,g}$ to be iso is a failure of the ``semantic'' functor $\fun{P}$ to be ``fully compositional'' with respect to the composition $f \Cp g$.
Thus the elements of these homotopy posets may be seen as local \emph{obstructions to compositionality} of $\fun{P}$.
Most interestingly, these obstructions are covariant with respect to the 2-morphisms of $\cat{C}$; thus we can think of ``modifying $f$ and $g$'' by acting on them with a 2-morphism, and see how that affects the obstructions.

\subsection{Open Graphs}\label{subsec: open graphs}
We apply our framework to a couple of tangible examples.
Open graphs, defined in~\cite{Fong2015}, can be thought of as \emph{graphs with interfaces}. Formally, open graphs are (isomorphism classes of) decorated cospans with decorations in the category $\catgrph$ of graphs and homomorphisms. Intuitively, they are depicted as in the examples below, with \emph{input} vertices on the left and \emph{output} vertices on the right:
\begin{equation*}
    \scalebox{0.75}{
    \begin{tikzpicture}
        \begin{scope}[xshift=-2cm]
            \node[circle, fill, minimum size=5pt, inner sep=0pt, label=left:{$1$}] (al1) at (-2,0) {};
            \node[circle, fill, minimum size=5pt, inner sep=0pt, label=right:{$1$}] (ar1) at (0,0) {};
            \node[circle, fill, minimum size=5pt, inner sep=0pt, label=right:{$2$}] (ar2) at (0,-1) {};
            \node[circle, fill, minimum size=5pt, inner sep=0pt, label=right:{$3$}] (ar3) at (0,-2) {};
                \draw[thick] (al1) to (ar1);
                \draw[thick, out=180, in=180, looseness=2] (ar2) to (ar3);
        \end{scope}
        \begin{scope}[xshift=2cm]
            \node[circle, fill, minimum size=5pt, inner sep=0pt, label=right:{$1$}] (br3) at (2,-2) {};
            \node[circle, fill, minimum size=5pt, inner sep=0pt, label=left:{$1$}] (bl1) at (0,0) {};
            \node[circle, fill, minimum size=5pt, inner sep=0pt, label=left:{$2$}] (bl2) at (0,-1) {};
            \node[circle, fill, minimum size=5pt, inner sep=0pt, label=left:{$3$}] (bl3) at (0,-2) {};
                \draw[thick, out=0, in=0, looseness=2] (bl1) to (bl2);
                \draw[thick] (bl3) to (br3);
        \end{scope}
        \begin{scope}[xshift=8cm]
            \begin{scope}[xshift=-0cm]
                \node[circle, fill, minimum size=5pt, inner sep=0pt,label=left:{$1$}] (al1) at (-2,0) {};
                \node[circle, fill, minimum size=5pt, inner sep=0pt] (ar1) at (0,0) {};
                \node[circle, fill, minimum size=5pt, inner sep=0pt] (ar2) at (0,-1) {};
                \node[circle, fill, minimum size=5pt, inner sep=0pt] (ar3) at (0,-2) {};
                    \draw[thick] (al1) to (ar1);
                    \draw[thick, out=180, in=180, looseness=2] (ar2) to (ar3);
            \end{scope}
            \begin{scope}[xshift=0cm]
                \node[circle, fill, minimum size=5pt, inner sep=0pt, label=right:{$1$}] (br3) at (2,-2) {};
                \node[circle, fill, minimum size=5pt, inner sep=0pt] (bl1) at (0,0) {};
                \node[circle, fill, minimum size=5pt, inner sep=0pt] (bl2) at (0,-1) {};
                \node[circle, fill, minimum size=5pt, inner sep=0pt] (bl3) at (0,-2) {};
                    \draw[thick, out=0, in=0, looseness=2] (bl1) to (bl2);
                    \draw[thick] (bl3) to (br3);
            \end{scope}
        \end{scope}
    \end{tikzpicture}}
\end{equation*}
Indeed, there is a bicategory $\catopengrph$ that has sets as objects, open graphs as morphisms, and interface-preserving graph homomorphisms as 2-morphisms.
For instance, the first and second open graphs above correspond to morphisms $G\colon \{1\} \to \{1,2,3\}$ and $H\colon \{1,2,3\} \to \{1\}$. 
These morphisms can be composed, resulting in the morphism $G \Cp H\colon \{1\} \to \{1\}$ corresponding to the third open graph in the picture above.

Every graph can be mapped to its \emph{reachability relation}\footnote{Cfr. \cite{lorenz2023causal}, for the similar example of open causal models and causal influence.}: this is a relation on the vertexes of the graph, where two vertexes are considered related iff there is a path between them.
Reachability can be recast as a lax functor $\catopengrph \to \catrel$ to the bicategory of sets, relations, and inclusions of relations, which maps an open graph $G\colon X \to Y$ to the relation $\fun{R}G\colon X \to Y$ defined by
\begin{equation*}
    \text{$\fun{R}G(x, y)$ if and only if there is a path between the input vertex $x$ and the output vertex $y$.}
\end{equation*}
Because $\catrel$ is locally posetal, to define $\fun{R}$ on 2-morphisms it suffices to verify that, if $f\colon G \to G'$ is a graph homomorphism, then $\fun{R}G \subseteq \fun{R}G'$.
The laxators are also uniquely defined.

We can see that this functor is not strong.
In the example above we have that $\fun{R}G \subseteq \{1\} \times \{1,2,3\}$ only contains the pair $(1,1)$, since there are no paths from $1$ to $2$ and from $1$ to $3$ in $G$.
Similarly, $\fun{R}H \subseteq \{1,2,3\} \times \{1\}$ only contains the pair $(3,1)$.
It follows that $\fun{R}G \Cp \fun{R}H\colon \{1\} \to \{1\}$ is the empty relation, but $\fun{R}(G \Cp H)\colon \{1\} \to \{1\}$ is total, so $\fun{R}G \Cp \fun{R}H \subsetneq \fun{R}(G \Cp H)$.

The result is that, if we want to compute the reachability relation of $G \Cp H$ by looking at the reachability relations of $G$ and $H$ separately, we are going to miss something.
This ``compositionality gap'' is tracked by the $\pi_0$ associated to the laxator components $\varphi_{G, H}\colon \fun{R}G \Cp \fun{R}H \subseteq \fun{R}(G \Cp H)$ (because these are all injective, the $\pi_1$ will always be trivial).

In our example, $\dhom{0}{(\slice{\homset{\catrel}{\{1\}}{\{1\}}}{\fun{R}(G\Cp H)})}{\varphi_{G,H}}$ is isomorphic to the poset $(\varnothing < \{(1, 1)\})$ pointed with $\varnothing$, so there is exactly one non-trivial obstruction.
Using covariance, we can think of ``removing the obstruction'' by modifying one or both of the parts $G$ or $H$ with a 2-morphism, that is, with a graph homomorphism.
For example, we can act on $G$ with the homomorphism which identifies the output vertices $1$ and $3$.
The resulting graph $G'$ has $\fun{R}G' = \{(1, 1), (1, 3)\}$, so $\fun{R}G' \Cp \fun{R}H = \fun{R}(G' \Cp H) = \{(1, 1)\}$; correspondingly, we obtain a map of pointed posets from the $\pi_0$ associated to $\varphi_{G, H}$ to the $\pi_0$ associated to $\varphi_{G', H}$, which ``trivialises all obstructions''.
\subsection{Schr\"odinger Compositionality}\label{subsec: schrodinger compositionality}

The name \emph{Schr\"odinger compositionality} was introduced in \cite{coecke2021compositionality} to refer to the form of compositionality that exists in quantum mechanics, where \emph{non-separable states} are present, to disambiguate it from others.
\footnote{For the purposes of this work, we are leaving out of the present analysis the aspects of Schr\"odinger compositionality regarding the  ``ontological interpretation", originally presented in  \cite{coecke2021compositionality}.}
In the following, we will focus on the special case of a state that can be ``more than its parts''.
This is arguably what makes composition interesting in quantum mechanics: it makes entanglement possible, which Schr\"odinger described as ``the characteristic trait of quantum mechanics'' \cite{Schrodinger_1935}.
In contrast with the example of open graphs, where the ``compositionality gap'' represents an obstacle to a computation strategy, here it can be seen as a positive feature.
Our approach can be used in both contexts; we will focus on the case study of non-separable states, recasting it as the failure of a lax functor to be strong.

In the context of monoidal categories,
a \emph{state} is a morphism $\TensorUnit \to A$, where $\TensorUnit$ is the monoidal unit.
We say that a state $\psi\colon \TensorUnit \to A \otimes B$ is \emph{separable} if there exist states $\psi_A\colon \TensorUnit \to A$ and $\psi_B\colon \TensorUnit \to B$ such that $\psi = \psi_A \otimes \psi_B$.
%
%
%

%
%
\begin{definition}
    Let $(\cat{C}, \otimes, \TensorUnit)$ be a monoidal category.
    The \emph{state functor} of $\cat{C}$ is the representable functor $\homset{\cat{C}}{\TensorUnit}{-}\colon \cat{C} \to \catset$. 
\end{definition}
\begin{proposition}[Laxity of the state functor]\label{prop: state functor lax}
    The state functor lifts to a lax monoidal functor from $(\cat{C}, \otimes, \TensorUnit)$ to $(\catset, \times, \{*\})$, with laxator components
    \begin{align*}
        \varphi_{A,B}\colon \homset{\CategoryC}{\TensorUnit}{A} \times \homset{\CategoryC}{\TensorUnit}{B}
            &\rightarrow 
            \homset{\CategoryC}{\TensorUnit}{A \otimes B}\\
        (\psi_A, \psi_B) 
            &\mapsto
            \psi_A \otimes \psi_B.
    \end{align*}
\end{proposition}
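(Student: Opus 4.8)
The plan is to exhibit the required structure maps and then verify the lax-monoidal coherence axioms, organising the argument around the observation that this proposition is a special case of a general fact about representables on a monoidal category.

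First I would pin down the data. Functoriality of $\fun{P} \eqdef \homset{\cat{C}}{\TensorUnit}{-}$ is the usual post-composition action, $\fun{P}f = (-)\Cp f$. The unit comparison is the map $\epsilon\colon \{*\} \to \homset{\cat{C}}{\TensorUnit}{\TensorUnit}$ selecting $\idd{\TensorUnit}$. For the laxator, the expression ``$\psi_A \otimes \psi_B$'' in the statement must be read precisely as the composite $\TensorUnit \xrightarrow{\,\lambda_{\TensorUnit}^{-1}\,} \TensorUnit \otimes \TensorUnit \xrightarrow{\,\psi_A \otimes \psi_B\,} A \otimes B$, where $\lambda$ is the left unitor of $\cat{C}$ (recall $\lambda_{\TensorUnit} = \rho_{\TensorUnit}$, so there is no ambiguity). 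So I set $\varphi_{A,B}(\psi_A,\psi_B) \eqdef \lambda_{\TensorUnit}^{-1} \Cp (\psi_A \otimes \psi_B)$, an honest element of $\homset{\cat{C}}{\TensorUnit}{A \otimes B}$; there is no well-definedness subtlety since every ingredient is a genuine morphism of $\cat{C}$.

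The conceptual engine is the following: for any object $c$ of a monoidal category, a comonoid structure $(c,\, \delta\colon c \to c\otimes c,\, \varsigma\colon c \to \TensorUnit)$ equips $\homset{\cat{C}}{c}{-}$ with a lax monoidal structure whose laxator is $(\psi_A,\psi_B)\mapsto \delta \Cp (\psi_A\otimes\psi_B)$ and whose unit picks out $\varsigma$; the lax-monoidal axioms then unwind to the coassociativity and counitality of $(c,\delta,\varsigma)$, after inserting naturality squares for the associator and unitors of $\cat{C}$. The state functor is the instance $c = \TensorUnit$ with the canonical comonoid structure $\delta = \lambda_{\TensorUnit}^{-1}$, $\varsigma = \idd{\TensorUnit}$, whose coassociativity and counitality are instances of Mac Lane coherence (equivalently, consequences of the pentagon and triangle identities of $\cat{C}$). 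I would either cite this general fact or, to keep the paper self-contained, carry out the three concrete verifications: (i) naturality of $\varphi_{A,B}$ in $A$ and $B$, immediate from bifunctoriality of $\otimes$, since both composites $(\varphi_{A,B})\Cp\fun{P}(f\otimes g)$ and $(\fun{P}f\times\fun{P}g)\Cp\varphi_{A',B'}$ send $(\psi_A,\psi_B)$ to $\lambda_{\TensorUnit}^{-1}\Cp(\psi_A\otimes\psi_B)\Cp(f\otimes g)$; (ii) the associativity axiom, which after using functoriality of $\otimes$ and one naturality square for $\alpha$ (applied to $(\psi_A\otimes\psi_B)\otimes\psi_C$) reduces to the coherence identity $\lambda_{\TensorUnit}^{-1}\Cp(\lambda_{\TensorUnit}^{-1}\otimes\idd{\TensorUnit}) = \lambda_{\TensorUnit}^{-1}\Cp(\idd{\TensorUnit}\otimes\lambda_{\TensorUnit}^{-1})\Cp\alpha_{\TensorUnit,\TensorUnit,\TensorUnit}$; (iii) the two unit triangles, which unwind via naturality of $\lambda$ and $\rho$ to $\lambda_{\TensorUnit}^{-1}\Cp(\idd{\TensorUnit}\otimes\psi)\Cp\lambda_A = \psi$ and its right-handed mirror, both trivially true.

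The main obstacle — really the only non-routine part — is the coherence bookkeeping in step (ii): one draws the pentagon-shaped diagram comparing the two bracketings of a triple laxator, cuts it into a region living purely among coherence isomorphisms of $\cat{C}$ (closed by Mac Lane's theorem) and a region that is a naturality square for $\alpha$, and checks the two pieces glue. On the target side nothing needs checking, because in $(\catset,\times,\{*\})$ the associativity and unit comparison data are the evident canonical bijections and behave ``on the nose''; so all the content sits on the $\cat{C}$ side and is absorbed by coherence. I expect the final write-up to be short once the general comonoid lemma is stated.
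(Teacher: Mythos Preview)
Your proposal is correct. The paper itself does not include a proof of this proposition in the provided source; it is stated without argument, with proofs of this and neighbouring results deferred to the extended version. Your approach---recognising the state functor as the representable $\homset{\cat{C}}{\TensorUnit}{-}$ and reducing the lax monoidal axioms to the coassociativity and counitality of the canonical comonoid structure on $\TensorUnit$---is the standard and cleanest way to establish the result, and the concrete verifications you outline (naturality from bifunctoriality, associativity via one naturality square for $\alpha$ plus Mac Lane coherence on the unit, unit triangles via naturality of $\lambda$ and $\rho$) are exactly what one would expect a self-contained write-up to contain.
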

Recall that a monoidal category is \emph{semicartesian} if its monoidal unit is terminal.
The following result is a consequence of the general fact that a functor from a semicartesian to a cartesian monoidal category has a canonical oplax monoidal structure.
\begin{proposition}[Oplaxity of the state functor]\label{prop: state functor oplax}
    Let $(\cat{C}, \otimes, \Term)$ be a semicartesian category.
    Then the state functor lifts to an oplax monoidal functor from $(\cat{C}, \otimes, \Term)$ to $(\catset, \times, \{*\})$.
\end{proposition}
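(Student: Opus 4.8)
The plan is to make the cited general fact explicit in this special case: build the oplaxators out of the canonical projections of the semicartesian structure on $\cat{C}$, and then verify the oplax coherence conditions using that the target $(\catset, \times, \{*\})$ is \emph{cartesian}. Note that here, since $\cat{C}$ is semicartesian, its monoidal unit $\TensorUnit$ coincides with the terminal object $\Term$, so the state functor is $\homset{\cat{C}}{\Term}{-}$.

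\textbf{Step 1: projections.} Since $\Term$ is terminal in $\cat{C}$, for every object $B$ there is a unique morphism $!_B\colon B \to \Term$, and for each pair $A, B$ we set
\[
    \mathrm{p}_A \eqdef \bigl(A \otimes B \xrightarrow{\idd{A}\,\otimes\, !_B} A \otimes \Term \xrightarrow{\rho_A} A\bigr), \qquad \mathrm{p}_B \eqdef \bigl(A \otimes B \xrightarrow{!_A\,\otimes\,\idd{B}} \Term \otimes B \xrightarrow{\lambda_B} B\bigr),
\]
where $\rho, \lambda$ are the unitors of $\cat{C}$. Naturality of the unitors together with the uniqueness of maps into $\Term$ make $\mathrm{p}_A$ and $\mathrm{p}_B$ natural in $A$ and in $B$.

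\textbf{Step 2: the oplaxators.} Define the oplaxator component at $(A,B)$ to be the function
\[
    \varphi^{\mathrm{op}}_{A,B}\colon \homset{\cat{C}}{\Term}{A\otimes B} \longrightarrow \homset{\cat{C}}{\Term}{A}\times\homset{\cat{C}}{\Term}{B}, \qquad \sigma \longmapsto (\mathrm{p}_A\circ\sigma,\ \mathrm{p}_B\circ\sigma),
\]
equivalently $\langle \homset{\cat{C}}{\Term}{\mathrm{p}_A},\ \homset{\cat{C}}{\Term}{\mathrm{p}_B}\rangle$ via the universal property of the product in $\catset$; and let the unit oplaxator $\homset{\cat{C}}{\Term}{\Term} \to \{*\}$ be the unique function, which is in fact a bijection since $\Term$ is terminal. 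Naturality of $\varphi^{\mathrm{op}}_{A,B}$ in $A$ and $B$ is immediate from Step 1, functoriality of $\homset{\cat{C}}{\Term}{-}$, and functoriality of $\times$.

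\textbf{Step 3: coherence — the only real obstacle.} It remains to check the associativity coherence square and the two unitality coherence diagrams of an oplax monoidal functor. I would exploit that $\catset$ is cartesian: a map into an (iterated) product is determined by its projections, so each coherence diagram commutes as soon as it commutes after post-composition with every $\catset$-side product projection. After that reduction, both legs of each diagram become $\homset{\cat{C}}{\Term}{-}$ applied to a composite of projections, associators and unitors of $\cat{C}$ with codomain one of $A, B, C$; one then checks these composites are already equal \emph{in $\cat{C}$} — e.g.\ $\mathrm{p}_A\circ\mathrm{p}_{A\otimes B} = \mathrm{p}_A\circ\alpha_{A,B,C}$ as maps $(A\otimes B)\otimes C \to A$ — which is a short chase from the triangle and pentagon identities together with terminality of $\Term$ (any two morphisms into $\Term$ agree). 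Applying $\homset{\cat{C}}{\Term}{-}$ transports these equalities to $\catset$, yielding the coherence laws. Alternatively, this step can be replaced by a citation to the general statement that any functor from a semicartesian to a cartesian monoidal category carries a canonical oplax monoidal structure, of which the proposition is the instance $\cat{C} \to \catset$, $\fun{F} = \homset{\cat{C}}{\Term}{-}$; but the explicit formula for $\varphi^{\mathrm{op}}$ above is worth recording in any case, since it is what feeds into \autoref{prop: covariance natural transformation}.
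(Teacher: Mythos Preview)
Your proof is correct and follows exactly the approach the paper indicates: the paper does not spell out a proof but simply records the proposition as ``a consequence of the general fact that a functor from a semicartesian to a cartesian monoidal category has a canonical oplax monoidal structure'', and your Steps 1--3 are precisely the unpacking of that general fact in the present instance, with the same projections $\mathrm{p}_A, \mathrm{p}_B$ built from terminality of $\Term$ and the unitors. Your explicit formula for $\varphi^{\mathrm{op}}_{A,B}$ and the reduction of the coherence laws to projection-wise equalities in $\cat{C}$ are the standard verification, and your closing remark that one may alternatively cite the general statement is in fact what the paper does.
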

Clearly, there are cases where the state functor is not just lax or oplax, but strong.
The following result captures the well-known fact that in a cartesian monoidal category every state is separable.
\begin{proposition}[Strongness of the state functor]\label{prop: state functor strong}
    If $(\CategoryC, \times, \Term)$ is cartesian, then the state functor is strong monoidal.
\end{proposition}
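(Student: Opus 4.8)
The plan is to exploit the fact that the state functor $\homset{\CategoryC}{\Term}{-}\colon \CategoryC \to \catset$ is \emph{representable}, hence preserves all limits that exist in $\CategoryC$; when the monoidal structures on both sides are cartesian, preservation of finite products and of the terminal object is exactly the statement that the functor is strong monoidal. Concretely, by \autoref{prop: state functor lax} we already have a lax monoidal structure, so it suffices to check that its structure morphisms --- the laxator components $\varphi_{A,B}$ and the unit comparison $\{*\} \to \homset{\CategoryC}{\Term}{\Term}$ --- are isomorphisms in $(\catset, \times, \{*\})$, i.e.\ bijections. The unit comparison is a bijection because $\Term$ is terminal, so $\homset{\CategoryC}{\Term}{\Term}$ is a singleton.

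For the laxators, I would exhibit an explicit inverse. By the universal property of the categorical product $A \times B$, every state $\psi\colon \Term \to A \times B$ is uniquely determined by the pair $(\psi \Cp \pi_A,\ \psi \Cp \pi_B)$, where $\pi_A, \pi_B$ are the product projections; this gives a function
\begin{equation*}
    \homset{\CategoryC}{\Term}{A \times B} \;\longrightarrow\; \homset{\CategoryC}{\Term}{A} \times \homset{\CategoryC}{\Term}{B}, \qquad \psi \longmapsto (\psi \Cp \pi_A,\ \psi \Cp \pi_B),
\end{equation*}
with inverse the pairing $(\psi_A, \psi_B) \mapsto \langle \psi_A, \psi_B\rangle$. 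The remaining point is to identify this pairing with the laxator $\varphi_{A,B}$ of \autoref{prop: state functor lax}, which sends $(\psi_A, \psi_B)$ to $\psi_A \otimes \psi_B$. Unwinding the monoidal product of a cartesian category, $\psi_A \otimes \psi_B$ is the composite of the structural isomorphism $\Term \iso \Term \times \Term$ with $\psi_A \times \psi_B$; composing this further with $\pi_A$ (resp.\ $\pi_B$) and using that every endomorphism of $\Term$ is the identity yields $\psi_A$ (resp.\ $\psi_B$), so $\varphi_{A,B}(\psi_A, \psi_B) = \langle \psi_A, \psi_B\rangle$ by uniqueness of pairings. Hence each $\varphi_{A,B}$ is a bijection.

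The only real bookkeeping --- and the step I expect to be the mildest obstacle --- is this last identification of the abstractly defined laxator with the product pairing, together with the observation that no further coherence needs checking: a lax monoidal functor all of whose structure morphisms are invertible is automatically strong, since the inverses inherit the required coherence diagrams from the lax ones. Once $\varphi_{A,B}$ and the unit comparison are seen to be isomorphisms, the claim follows by combining \autoref{prop: state functor lax} with the definition of a strong monoidal functor.
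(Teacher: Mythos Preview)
Your proof is correct. The paper itself does not include a proof of this proposition: it is stated as capturing ``the well-known fact that in a cartesian monoidal category every state is separable'', with details deferred to the extended version. Your argument --- that the representable functor $\homset{\CategoryC}{\Term}{-}$ preserves products, and that in the cartesian case this preservation is precisely the statement that the laxator of \autoref{prop: state functor lax} is invertible --- is the standard one and matches the intuition the paper signals. The explicit identification of $\varphi_{A,B}(\psi_A,\psi_B)$ with the pairing $\langle\psi_A,\psi_B\rangle$ via the projections is exactly the content of ``every state is separable'', so your approach is in line with what the authors have in mind.
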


Having turned Schr\"odinger compositionality into a question about (op)laxity of a functor, we can put our framework to good work.
By \autoref{prop: covariance natural transformation}, we have functors $\cat{C} \times \cat{C} \to \pointed{\catpos}$ sending pairs of objects $(A, B)$ of $\cat{C}$ to the homotopy posets 
\begin{equation} \label{eq: state_dhom }
    \dhom{i}{(\slice{\Set}{\homset{\cat{C}}{I}{A \otimes B}})}{\varphi_{A, B}}, \quad i \in \{ 0, 1 \}.
\end{equation}
Using the description of homotopy posets for slices of $\Set$ from \autoref{sec: obstructions}, we see that
\begin{itemize}
    \item minimal obstructions in $\pi_0$ are in bijection with non-separable states of $A \otimes B$,
    \item minimal obstructions in $\pi_1$ are in bijection with pairs of pairs of states $((\psi_A, \psi_B), (\chi_A, \chi_B))$ such that $\psi_A \otimes \psi_B = \chi_A \otimes \chi_B$.
\end{itemize}
For example, in $(\lcat{Vect}_\mathbb{C}, \otimes, \mathbb{C})$, the monoidal category of complex vector spaces with their tensor product, whenever $A$ and $B$ are at least 2-dimensional, we have instances of both:
\begin{itemize}
    \item the state $1 \mapsto \begin{pmatrix}1 \\ 0\end{pmatrix} \otimes \begin{pmatrix}1 \\ 0\end{pmatrix} + \begin{pmatrix}0 \\ 1\end{pmatrix} \otimes \begin{pmatrix}0 \\ 1\end{pmatrix}$ of $\mathbb{C}^2 \otimes \mathbb{C}^2$ is non-separable,
    \item given any pair of states $(\psi_A, \psi_B)$ and any non-zero $\lambda \in \mathbb{C}$, the pair $(\chi_A, \chi_B) \eqdef (\lambda \psi_A, \invrs{\lambda} \psi_B)$ satisfies $\psi_A \otimes \psi_B = \chi_A \otimes \chi_B$.
\end{itemize}
We can derive a few simple, immediate consequences from the covariance of (\ref{eq: state_dhom }) in the pair $(A, B)$.
\begin{enumerate}
    \item Given morphisms $f\colon A \to A'$, $g\colon B \to B'$, the induced maps of posets preserve the basepoint, that is, map ``non-obstructions'' to ``non-obstructions''.
    In this case, this implies that \emph{it is not possible to entangle a separable state by local actions}, that is, by applying morphisms on $A$ and $B$ separately.
    \item On the other hand, it is, in principle, possible for the induced maps to send non-trivial obstructions to the basepoint.
    For example, in complex vector spaces, acting on $A$ or $B$ with a rank-1 linear map always has a separating effect.
\end{enumerate}

    \section*{Conclusion}

We have introduced our new invariants of categories and stated their fundamental properties, before sketching, through a couple of simple examples, how they may be used to obtain a more fine-grained analysis of ``failures of compositionality'' than a simple yes-or-no judgement.
In an extended technical paper, we will study their formal aspects more in depth, including criteria for the existence of joins and meets, induced monoidal structures, and finer aspects of functoriality.

Most importantly, we hope to have opened a new avenue in ``formal compositionality theory''.
The greatest challenge will be to graduate from proof-of-concept examples to ones that reveal more interesting structure, perhaps in non-$\Set$-like categories where a split epi or mono is not simply a surjective or injective map.
We have been looking at case studies of this sort, which nevertheless have manageable combinatorics permitting an exhaustive study of their homotopy posets, and we hope to discuss them in future work.

	\bibliographystyle{eptcs}
	\bibliography{main}

    \newpage
\section*{Appendix}

\subsection*{Proof of Proposition 10}

    Proving~\autoref{prop: Homotopy posets are functorial} requires to build a hefty amount of theory, which is why we reserve the Appendix for this.

    \begin{definition}[Past extension] \label{dfn:past_extension}
    Let $\cat{A}$ be a category.
    A \emph{past extension of $\cat{A}$} is a functor $\imath\colon \cat{A} \incl \cat{B}$ with the following property: there exists a functor $\indic{\cat{A}}\colon \cat{B} \to \wkarr$ such that
    \begin{equation} \label{eq:past_extension}
    \begin{tikzcd}[sep=scriptsize]
	\cat{A} && \Term \\
	\\
	\cat{B} && \wkarr
	\arrow["{!}", from=1-1, to=1-3]
	\arrow[hook, "\imath", from=1-1, to=3-1]
	\arrow["\indic{\cat{A}}", from=3-1, to=3-3]
	\arrow[hook, "{1}", from=1-3, to=3-3]
	\arrow["\lrcorner"{anchor=center, pos=0.125}, draw=none, from=1-1, to=3-3]
    \end{tikzcd}
    \end{equation}
    is a pullback in $\catcat$.
    \end{definition}
    \begin{remark} \label{rmk:past extension collage}
    The following is an equivalent characterisation of past extensions: there exist a category $\cat{\bar{A}}$ and a profunctor $\fun{H}\colon \opp{\cat{\bar{A}}} \times \cat{A} \to \catset$ such that
    \begin{enumerate}
        \item $\cat{B}$ is isomorphic to the \emph{collage}, also known as \emph{cograph}, of $\fun{H}$, and
        \item $\imath$ is, up to isomorphism, the inclusion of $\cat{A}$ into the collage.
    \end{enumerate}
    A technical name for a functor satisfying the condition on $\imath$ is \emph{codiscrete coopfibration}; it is one leg of a two-sided codiscrete cofibration of categories.

    The idea is that $\imath$ embeds $\cat{A}$ into a larger category, whose objects outside of the image of $\cat{A}$ only have morphisms pointing \emph{towards} $\cat{A}$, hence are ``in the past'' of $\cat{A}$ if we interpret the direction of morphisms as a time direction.
    Notice that the fact that (\ref{eq:past_extension}) is a pullback implies that $\imath$ is injective on objects and morphisms, using their representation as functors from $\Term$ and $\wkarr$, respectively.
    
    The following picture illustrates the bipartition of $\cat{B}$ induced by $\indic{A}$, with the fibre $\cat{\bar{A}}$ of 0 ``in the past'' of the fibre $\cat{A}$ of 1:
    \[\begin{tikzcd}[sep=scriptsize]
	{\cat{B}} & {\blue{\cat{\bar{A}}}} & {\blue{\bullet}} &&& {\red{\bullet}} & {\red{\cat{A}}} \\
	& {\blue{\bullet}} &&&&& {\red{\bullet}} \\
	&& {\blue{\bullet}} &&& {\red{\bullet}} \\
	\wkarr && {\blue{0}} &&& {\red{1}}
	\arrow[color={rgb,255:red,92;green,92;blue,214}, curve={height=-6pt}, from=2-2, to=1-3]
	\arrow[color={rgb,255:red,92;green,92;blue,214}, curve={height=6pt}, from=3-3, to=1-3]
	\arrow[color={rgb,255:red,92;green,92;blue,214}, curve={height=-6pt}, from=2-2, to=3-3]
	\arrow[curve={height=-6pt}, from=1-3, to=1-6]
	\arrow[color={rgb,255:red,214;green,92;blue,92}, curve={height=-6pt}, from=1-6, to=2-7]
	\arrow["{\indic{A}}", from=1-1, to=4-1]
	\arrow[color={rgb,255:red,214;green,92;blue,92}, curve={height=-6pt}, from=1-6, to=3-6]
	\arrow[curve={height=-6pt}, from=3-3, to=3-6]
	\arrow[curve={height=-18pt}, from=1-3, to=3-6]
	\arrow["a", from=4-3, to=4-6]
    \end{tikzcd}\]
    \end{remark}
    \begin{definition}[Category of past extensions] \label{dfn:category_of_past_extensions}
    Let $\cat{A}$ be a category.
    The \emph{category of past extensions of $\cat{A}$} is the large category $\pastext{\cat{A}}$ whose
    \begin{itemize}
        \item objects are past extensions $\imath\colon A \incl B$, and
        \item a morphism from $(\imath\colon A \incl B)$ to $(j\colon A \incl B')$ is a factorisation of $j$ through $\imath$, that is, a functor $\fun{K}\colon \cat{B} \to \cat{B'}$ such that $j = \imath\Cp \fun{K}$.
    \end{itemize}
    \end{definition}
    \begin{proposition}[The indexed category of past extensions of functors]
    Let $\cat{A}$ and $\cat{C}$ be categories.
    Then there exists a functor
    \begin{equation*}
        \extfun{\cat{A}}{\cat{C}}\colon \opp{\pastext{\cat{A}}} \times \cat{C}^\cat{A} \to \catcat
    \end{equation*}
    whose object part is defined as follows: 
    given a past extension $\imath\colon \cat{A} \incl \cat{B}$ and a functor $\fun{F}\colon \cat{A} \to \cat{C}$, the category $\extfun{\cat{A}}{\cat{C}}(\imath, \fun{F})$ is the subcategory of $\cat{C}^\cat{B}$ whose
    \begin{itemize}
        \item objects are (strict) extensions of $\fun{F}$ along $\imath$, that is, functors $\fun{\tilde{F}}\colon \cat{B} \to \cat{C}$ such that
        \[\begin{tikzcd}[sep=scriptsize]
	   {\cat{A}} && {\cat{C}} \\
	   \\
	   {\cat{B}}
	   \arrow["\imath", hook, from=1-1, to=3-1]
	   \arrow["{\fun{F}}", from=1-1, to=1-3]
	   \arrow["{\fun{\tilde{F}}}"', from=3-1, to=1-3]
    \end{tikzcd}\]
    strictly commutes, and
    \item morphisms from $\fun{\tilde{F}_1}$ to $\fun{\tilde{F}_2}$ are natural transformations $\tau\colon \fun{\tilde{F}_1} \Rightarrow \fun{\tilde{F}_2}$ that restrict along $\imath$ to the identity natural transformation on $\fun{F}$.
    \end{itemize}
    \end{proposition}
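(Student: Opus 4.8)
The object part is already prescribed, so what remains is to define $\extfun{\cat{A}}{\cat{C}}$ on morphisms and check the two functor axioms. First, note that $\extfun{\cat{A}}{\cat{C}}(\imath, \fun{F})$ really is a (possibly large) category: it is a non-full subcategory of $\cat{C}^{\cat{B}}$, and both conditions defining it --- ``being a strict extension of $\fun{F}$ along $\imath$'' on objects and ``restricting to $\idd{\fun{F}}$ along $\imath$'' on $2$-cells --- are plainly closed under identities and composition. Throughout I would use the anatomy of a past extension: the classifying functor $\indic{\cat{A}}\colon\cat{B}\to\wkarr$ partitions $\cat{B}$ into the full subcategory $\imath(\cat{A})$ (the fibre over $1$) and a complementary ``past'' part (the fibre over $0$), with the crucial feature that there are \emph{no} morphisms from $\imath(\cat{A})$ into the past; and a morphism of $\pastext{\cat{A}}$ is compatible with the classifying functors --- equivalently, by \autoref{rmk:past extension collage}, a morphism of the associated collages --- so it preserves this partition in both directions.

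The plan for the morphism part is to split it into two. A morphism $(\imath, \fun{F}) \to (j, \fun{G})$ in $\opp{\pastext{\cat{A}}} \times \cat{C}^{\cat{A}}$ consists of a functor $\fun{K}\colon \cat{B}' \to \cat{B}$ with $\imath = j \Cp \fun{K}$ (the underlying datum of the $\opp{\pastext{\cat{A}}}$-component, which by the compatibility above restricts identically on the $\cat{A}$-parts and carries the past of $\cat{B}'$ into the past of $\cat{B}$) together with a natural transformation $\alpha\colon \fun{F} \Rightarrow \fun{G}$. I would set $\extfun{\cat{A}}{\cat{C}}(\fun{K}, \alpha) \eqdef \mathrm{Adj}_{\alpha} \circ \mathrm{Pre}_{\fun{K}}$, where:
\begin{itemize}
    \item $\mathrm{Pre}_{\fun{K}}\colon \extfun{\cat{A}}{\cat{C}}(\imath, \fun{F}) \to \extfun{\cat{A}}{\cat{C}}(j, \fun{F})$ is pure precomposition, sending $\fun{\tilde{F}}\colon \cat{B} \to \cat{C}$ to $\fun{K} \Cp \fun{\tilde{F}}$ (this lands correctly, since $j \Cp (\fun{K} \Cp \fun{\tilde{F}}) = \imath \Cp \fun{\tilde{F}} = \fun{F}$) and acting on $2$-cells by whiskering with $\fun{K}$;
    \item $\mathrm{Adj}_{\alpha}\colon \extfun{\cat{A}}{\cat{C}}(j, \fun{F}) \to \extfun{\cat{A}}{\cat{C}}(j, \fun{G})$ sends an extension $\fun{\tilde{F}'}$ of $\fun{F}$ along $j$ to the extension of $\fun{G}$ that equals $\fun{G}$ on $j(\cat{A})$, equals $\fun{\tilde{F}'}$ on the past, and sends a morphism $f\colon b' \to j(a)$ crossing from the past into $j(\cat{A})$ to $\fun{\tilde{F}'}(f) \Cp \alpha_{a}$; on $2$-cells it is the identity on $j(\cat{A})$ and unchanged on the past.
\end{itemize}
The essential well-definedness check is that $\mathrm{Adj}_{\alpha}(\fun{\tilde{F}'})$ is a functor: identities are immediate, and for composites the only non-trivial case is a past-to-$j(\cat{A})$ morphism followed by a $j(\cat{A})$-to-$j(\cat{A})$ morphism, which is handled exactly by naturality of $\alpha$ --- and the case list is complete precisely because there are no $j(\cat{A})$-to-past morphisms. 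Likewise, that $\mathrm{Adj}_{\alpha}$ takes $2$-cells to $2$-cells uses that they restrict to $\idd{\fun{F}}$ together with, again, the absence of morphisms into the past.

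Finally I would verify functoriality of $\extfun{\cat{A}}{\cat{C}}$ itself. That $\mathrm{Pre}_{(-)}$ is functorial in $\fun{K}$ and $\mathrm{Adj}_{(-)}$ is functorial in $\alpha$ follows from associativity of composition and whiskering; and $\mathrm{Pre}_{\fun{K}}$ and $\mathrm{Adj}_{\alpha}$ commute, $\mathrm{Pre}_{\fun{K}} \circ \mathrm{Adj}_{\alpha} = \mathrm{Adj}_{\alpha} \circ \mathrm{Pre}_{\fun{K}}$ --- this is where compatibility of $\fun{K}$ with the past/present partition is indispensable, as it guarantees that precomposing by $\fun{K}$ never moves an object from the régime where $\mathrm{Adj}_{\alpha}$ ``keeps'' it to the one where it ``replaces'' it. Granting these three facts, $\extfun{\cat{A}}{\cat{C}}(\idd{\cat{B}}, \idd{\fun{F}}) = \mathrm{Adj}_{\idd{\fun{F}}} \circ \mathrm{Pre}_{\idd{\cat{B}}} = \idd{}$, and for composable $(\fun{K}, \alpha)$, $(\fun{L}, \beta)$ one computes $\extfun{\cat{A}}{\cat{C}}$ of the composite as $\mathrm{Adj}_{\alpha \Cp \beta} \circ \mathrm{Pre}_{\fun{L} \Cp \fun{K}} = \mathrm{Adj}_{\beta} \circ \mathrm{Adj}_{\alpha} \circ \mathrm{Pre}_{\fun{L}} \circ \mathrm{Pre}_{\fun{K}} = \mathrm{Adj}_{\beta} \circ \mathrm{Pre}_{\fun{L}} \circ \mathrm{Adj}_{\alpha} \circ \mathrm{Pre}_{\fun{K}} = \extfun{\cat{A}}{\cat{C}}(\fun{L}, \beta) \circ \extfun{\cat{A}}{\cat{C}}(\fun{K}, \alpha)$, using the commutation in the middle. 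The main obstacle is the bookkeeping: keeping the op-variance of the $\pastext{\cat{A}}$-factor and the direction of $\fun{K}$ straight, and --- more substantively --- pinning down the exact places where the defining properties of past extensions and their morphisms must be invoked, namely the construction of $\mathrm{Adj}_{\alpha}(\fun{\tilde{F}'})$ and the commutation $\mathrm{Pre} \circ \mathrm{Adj} = \mathrm{Adj} \circ \mathrm{Pre}$. A more conceptual alternative --- presumably the route behind the paper's footnote on (co)fibrations --- would present $\extfun{\cat{A}}{\cat{C}}(\imath, \fun{F})$ via the collage description of \autoref{rmk:past extension collage}, as a category of pairs consisting of a functor on the past category and a natural family of comparison morphisms into $\fun{F}$, and then read off functoriality from that of the collage construction; but the hands-on argument above is self-contained.
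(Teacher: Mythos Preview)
Your approach is essentially the paper's: both decompose the action on morphisms into precomposition $\fun{K}^*$ (your $\mathrm{Pre}_{\fun{K}}$) for the $\pastext{\cat{A}}$-variable and an adjustment $\alpha_*$ (your $\mathrm{Adj}_\alpha$) for the $\cat{C}^{\cat{A}}$-variable, verify functoriality separately in each, and then appeal to the commutation of the two to define $\extfun{\cat{A}}{\cat{C}}$ on a general morphism. The paper carries out the case analysis for $\alpha_*$ in more detail but the structure is identical.

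One point deserves care. You assert that a morphism $\fun{K}$ of $\pastext{\cat{A}}$ is automatically compatible with the classifying functors, hence sends past to past. This does not follow from \autoref{dfn:category_of_past_extensions} as written, which only requires $j = \imath \Cp \fun{K}$: for example, the terminal functor $\wkarr \to \Term$ is a morphism from $(1\colon \Term \incl \wkarr)$ to $(\idd{}\colon \Term \incl \Term)$ in $\pastext{\Term}$ that collapses the past object onto the present one. Without past-preservation the commutation $\mathrm{Pre}_{\fun{K}} \circ \mathrm{Adj}_\alpha = \mathrm{Adj}_\alpha \circ \mathrm{Pre}_{\fun{K}}$ can genuinely fail (in that example, one side sends the past object to $\fun{F}(*)$ and the other to $\fun{G}(*)$). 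The paper's own proof simply asserts the commutative square without argument, so it shares this subtlety; and in the downstream application the particular morphisms $\fun{K_0}, \fun{K_1}$ used do respect the partition, so nothing breaks there. But your explicit claim of compatibility is not justified by the definition given, and you should either strengthen the notion of morphism in $\pastext{\cat{A}}$ to maps of collages (as your own remark hints) or record past-preservation as an explicit hypothesis where you invoke it.
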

    \begin{proof}
        Given a morphism $\fun{K}\colon (\imath\colon \cat{A} \incl \cat{B}) \to (j\colon \cat{A} \incl \cat{B'})$ in $\pastext{\cat{A}}$,
\begin{equation*}
    \fun{K}^* \eqdef \extfun{\cat{A}}{\cat{C}}(\fun{K},\fun{F})\colon \extfun{\cat{A}}{\cat{C}}(j, \fun{F}) \to \extfun{\cat{A}}{\cat{C}}(\imath, \fun{F})
\end{equation*}
is the functor that acts by precomposition, sending
\begin{itemize}
    \item $\fun{\tilde{F}}\colon \cat{B'} \to \cat{C}$ to $\fun{K}\Cp\fun{\tilde{F}}\colon \cat{B} \to \cat{C}$, and
    \item $\tau\colon \fun{\tilde{F}_1} \Rightarrow \fun{\tilde{F}_2}$ to $\fun{K}\Cp\tau\colon \fun{K}\Cp\fun{\tilde{F}_1} \Rightarrow \fun{K}\Cp\fun{\tilde{F}_2}$.
\end{itemize}
This is well-defined as
\begin{equation*}
    \imath\Cp\fun{K}\Cp\fun{\tilde{F}} = j\Cp\fun{\tilde{F}} = \fun{F}, \quad \quad
    \imath\Cp\fun{K}\Cp\tau = j\Cp\tau = \idd{\fun{F}}.
\end{equation*}
Moreover, it is straightforward to check that
\begin{equation*}
    (\idd{\imath})^* = \idd{\extfun{\cat{A}}{\cat{C}}(\imath, \fun{F})}, \quad \quad 
    (\fun{K}\Cp\fun{L})^* = \fun{L}^*\Cp \fun{K}^*
\end{equation*}
for any composable pair $\fun{K}, \fun{L}$ of morphisms in $\pastext{\cat{A}}$.

Given a natural transformation $\alpha\colon \fun{F} \Rightarrow \fun{G}$ between functors $\fun{F}, \fun{G}\colon \cat{A} \to \cat{C}$, the functor
\begin{equation*}
    \alpha_* \eqdef \extfun{\cat{A}}{\cat{C}}(\imath, \alpha)\colon \extfun{\cat{A}}{\cat{C}}(\imath, \fun{F}) \to \extfun{\cat{A}}{\cat{C}}(\imath, \fun{G})
\end{equation*}
is defined as follows.
Given an object $\fun{\tilde{F}}\colon \cat{B} \to \cat{C}$ of $\extfun{\cat{A}}{\cat{C}}(\imath, \fun{F})$, the functor $\alpha_*\fun{\tilde{F}}\colon \cat{B} \to \cat{C}$ is defined, on each morphism $f\colon x \to y$ in $\cat{B}$, by
\begin{equation*}
    \alpha_*\fun{\tilde{F}}(f) \eqdef
    \begin{cases}
        \fun{G}(f')
            & \text{if $\indic{A}(f) = 1$ and $f = \imath(f')$}, \\
        \fun{\tilde{F}}(f)\Cp\alpha_{y'}
            & \text{if $\indic{A}(f) = a$ and $y = \imath(y')$}, \\
        \fun{\tilde{F}}(f)
            & \text{if $\indic{A}(f) = 0$}.
    \end{cases}
\end{equation*}
By construction $\imath\Cp \alpha_*\fun{\tilde{F}} = \fun{G}$.
The following picture illustrates the definition.
\[\begin{tikzcd}[sep=scriptsize]
	&&&&& {\magenta{\fun{G}y}} & {\magenta{\fun{G}\cat{A}}} \\
	&&&&&& {\magenta{\bullet}} \\
	{\blue{\fun{\tilde{F}}\cat{\bar{A}} = \alpha_*\fun{\tilde{F}}\cat{\bar{A}}}} & {\blue{\fun{\tilde{F}}x}} &&& {\red{\fun{F}y}} & {\magenta{\bullet}} \\
	{\blue{\bullet}} &&&&& {\red{\bullet}} \\
	& {\blue{\bullet}} &&& {\red{\bullet}} & {\red{\fun{F}\cat{A}}}
	\arrow[color={rgb,255:red,92;green,92;blue,214}, curve={height=-6pt}, from=4-1, to=3-2]
	\arrow[color={rgb,255:red,92;green,92;blue,214}, curve={height=6pt}, from=5-2, to=3-2]
	\arrow[color={rgb,255:red,92;green,92;blue,214}, curve={height=-6pt}, from=4-1, to=5-2]
	\arrow["{\fun{\tilde{F}}f}"', curve={height=-6pt}, from=3-2, to=3-5]
	\arrow[color={rgb,255:red,214;green,92;blue,92}, curve={height=-6pt}, from=3-5, to=4-6]
	\arrow[color={rgb,255:red,214;green,92;blue,92}, curve={height=-6pt}, from=3-5, to=5-5]
	\arrow[curve={height=-6pt}, from=5-2, to=5-5]
	\arrow[curve={height=-18pt}, from=3-2, to=5-5]
	\arrow[color={rgb,255:red,214;green,92;blue,214}, curve={height=-6pt}, from=1-6, to=2-7]
	\arrow[color={rgb,255:red,214;green,92;blue,214}, curve={height=-6pt}, from=1-6, to=3-6]
	\arrow["{\alpha_y}"', color={rgb,255:red,36;green,143;blue,36}, from=3-5, to=1-6]
	\arrow[color={rgb,255:red,36;green,143;blue,36}, from=4-6, to=2-7]
	\arrow[color={rgb,255:red,36;green,143;blue,36}, from=5-5, to=3-6]
	\arrow["{\alpha_*\fun{\tilde{F}}f}", curve={height=-12pt}, dashed, from=3-2, to=1-6]
	\arrow[curve={height=-18pt}, dashed, from=3-2, to=3-6]
	\arrow[curve={height=-12pt}, dashed, from=5-2, to=3-6]
\end{tikzcd}\]
Let us show that $\alpha_*\fun{\tilde{F}}$ is well-defined as a functor.
\begin{enumerate}
    \item Given an identity $\idd{x}$ in $\cat{B}$, necessarily $\indic{A}(\idd{x}) = 0$, in which case 
    \[ \alpha_*\fun{\tilde{F}}(\idd{x}) = \fun{\tilde{F}}(\idd{x}) = \idd{\fun{\tilde{F}}(x)}, \] 
    or $\indic{A}(\idd{x}) = 1$, in which case
    \[ \alpha_*\fun{\tilde{F}}(\idd{x}) = \fun{G}(\idd{x'}) = \idd{\fun{G}(x')}, \]
    where $x'$ is the unique lift of $x$ to $\cat{A}$.
    Thus $\alpha_*\fun{\tilde{F}}$ preserves identities.

    \item Given a composable pair $f\colon x \to y$, $g\colon y \to z$, we have the following cases.
    \begin{itemize}
        \item If $\indic{A}(f) = \indic{A}(g) = 1$, then $\indic{A}(f\Cp g) = 1$, and
        \[
            \alpha_*\fun{\tilde{F}}(f)\Cp \alpha_*\fun{\tilde{F}}(g) = \fun{G}(f')\Cp\fun{G}(g') = \fun{G}(f'\Cp g') = \alpha_*\fun{\tilde{F}}(f\Cp g),
        \]
        where $f', g'$ are the unique lifts of $f, g$ to $\cat{A}$.
        \item If $\indic{A}(f) = \indic{A}(g) = 0$, then $\indic{A}(f\Cp g) = 0$, and
        \[
            \alpha_*\fun{\tilde{F}}(f)\Cp  \alpha_*\fun{\tilde{F}}(g) = \fun{\tilde{F}}(f)\Cp \fun{\tilde{F}}(g) = \fun{\tilde{F}}(f\Cp g) = \alpha_*\fun{\tilde{F}}(f\Cp g).
        \]
        \item If $\indic{A}(f) = 0$ and $\indic{A}(g) = a$, then $\indic{A}(f\Cp g) = a$, and
        \[
            \alpha_*\fun{\tilde{F}}(f)\Cp  \alpha_*\fun{\tilde{F}}(g) = \fun{\tilde{F}}(f)\Cp \fun{\tilde{F}}(g)\Cp \alpha_{z'} = \fun{\tilde{F}}(f\Cp g)\Cp \alpha_{z'} = \alpha_*\fun{\tilde{F}}(f\Cp g),
        \]
        where $z'$ is the unique lift of $z$ to $\cat{A}$.
        \item If $\indic{A}(f) = a$ and $\indic{A}(g) = 1$, then $\indic{A}(f\Cp g) = a$, and
        \[
            \alpha_*\fun{\tilde{F}}(f)\Cp  \alpha_*\fun{\tilde{F}}(g) = \fun{\tilde{F}}(f)\Cp \alpha_{y'}\Cp \fun{G}(g') = \fun{\tilde{F}}(f)\Cp \fun{F}(g')\Cp \alpha_{z'},
        \]
        where $g'\colon y' \to z'$ is the unique lift of $g$ to $\cat{A}$, and we used naturality of $\alpha$.
        
        Since $\fun{F}(g') = \tilde{\fun{F}}(\imath(g')) = \tilde{\fun{F}}(g)$, this is equal to 
        \[ \fun{\tilde{F}}(f)\Cp \fun{\tilde{F}}(g)\Cp \alpha_{z'} = \alpha_*\fun{\tilde{F}}(f\Cp g). \]
    \end{itemize}
    No other cases are possible.
\end{enumerate}
This proves that $\alpha_*\fun{\tilde{F}}$ is well-defined.

Given a morphism $\tau\colon \fun{\tilde{F}_1} \Rightarrow \fun{\tilde{F}_2}$ of $\extfun{\cat{A}}{\cat{C}}(\imath, \fun{F})$, the natural transformation $\alpha_*\tau\colon \alpha_*\fun{\tilde{F}_1} \Rightarrow \alpha_*\fun{\tilde{F}_2}$ is defined, on each object $x$ in $\cat{B}$, by
\begin{equation*}
    (\alpha_*\tau)_x \eqdef
    \begin{cases}
        \idd{\fun{G}(x')} & \text{if $\indic{A}(x) = 1$ and $x = \imath(x')$}, \\
        \tau_x & \text{if $\indic{A}(x) = 0$}.
    \end{cases}
\end{equation*}
To show that this is well-defined as a natural transformation, consider a morphism $f\colon x \to y$ in $\cat{B}$.
\begin{itemize}
    \item If $\indic{A}(f) = 1$ and $f'\colon x' \to y'$ is the unique lift of $f$ to $\cat{A}$, then
    \[
        \alpha_*\fun{\tilde{F}_1}(f)\Cp  (\alpha_*\tau)_y =
        \fun{G}(f')\Cp  \idd{\fun{G}(y')} = \idd{\fun{G}(x')}\Cp  \fun{G}(f') = (\alpha_*\tau)_x\Cp  \alpha_*\fun{\tilde{F}_2}(f).
    \]
    \item If $\indic{A}(f) = a$ and $y'$ is the unique lift of $y$ to $\cat{A}$, then
    \[
        \alpha_*\fun{\tilde{F}_1}(f)\Cp  (\alpha_*\tau)_y =
        \fun{\tilde{F}_1}(f)\Cp  \alpha_{y'}\Cp  \idd{\fun{G}(y')} = 
        \fun{\tilde{F}_1}(f)\Cp  \tau_{y}\Cp  \alpha_{y'}
    \]
    since $\tau_y = \tau_{\imath(y')} = \idd{\fun{F}(y')}$.
    By naturality of $\tau$, this is equal to
    \[
        \tau_x\Cp  \fun{\tilde{F}_2}(f)\Cp  \alpha_{y'} =
        (\alpha_*\tau)_x\Cp  \alpha_*\fun{\tilde{F}_2}(f).
    \]
    \item If $\indic{A}(f) = 0$, then
    \[
        \alpha_*\fun{\tilde{F}_1}(f)\Cp  (\alpha_*\tau)_y =
        \fun{\tilde{F}_1}(f)\Cp  \tau_{y} = \tau_x\Cp  \fun{\tilde{F}_2}(f) = (\alpha_*\tau)_x\Cp  \alpha_*\fun{\tilde{F}_2}(f).
    \]
\end{itemize}
This concludes the definition of $\alpha_*$.
It is straightforward to check that
\begin{equation*}
    (\idd{\fun{F}})_* = \idd{\extfun{\cat{A}}{\cat{C}}(\imath, \fun{F})}, \quad \quad 
    (\alpha\Cp \beta)_* = \alpha_* \Cp  \beta_*
\end{equation*}
for all pairs of natural transformations $\alpha, \beta$ composable as morphisms in $\cat{C}^\cat{A}$.
Finally, one can verify that, for all morphisms $\fun{K}\colon \imath \to j$ in $\pastext{\cat{A}}$ and $\alpha\colon \fun{F} \to \fun{G}$ in $\cat{C}^\cat{A}$, the diagram of functors
\[
\begin{tikzcd}[sep=scriptsize]
	\extfun{\cat{A}}{\cat{C}}(j, \fun{F}) && \extfun{\cat{A}}{\cat{C}}(\imath, \fun{F}) \\
	\\
	\extfun{\cat{A}}{\cat{C}}(j, \fun{G}) && \extfun{\cat{A}}{\cat{C}}(\imath, \fun{G})
	\arrow["\fun{K}^*", from=1-1, to=1-3]
	\arrow["\alpha_*", from=1-1, to=3-1]
	\arrow["\fun{K}^*", from=3-1, to=3-3]
	\arrow["\alpha_*", from=1-3, to=3-3]
\end{tikzcd}
\]
commutes in $\catcat$.
Thus we can define $\extfun{\cat{A}}{\cat{C}}(\fun{K}, \alpha)$ as either path in the commutative diagram, and conclude that $\extfun{\cat{A}}{\cat{C}}$ is well-defined as a functor.
    \end{proof}

    \begin{proposition}[Covariance of the $\extfun{\cat{A}}{\cat{C}}$]
    The assignment $\cat{C} \mapsto \extfun{\cat{A}}{\cat{C}}$ extends to a functor
    \begin{equation*}
        \extfun{\cat{A}}{}\colon \catcat \to \laxslice{\lcatcat}{\catcat}.
    \end{equation*}
    \end{proposition}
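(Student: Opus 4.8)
The plan is to attach to each category $\cat{C}$ the object $\bigl(\opp{\pastext{\cat{A}}} \times \cat{C}^{\cat{A}},\ \extfun{\cat{A}}{\cat{C}}\bigr)$ of $\laxslice{\lcatcat}{\catcat}$, using the functor $\extfun{\cat{A}}{\cat{C}}$ of the previous proposition, and then to turn each functor $\fun{F}\colon \cat{C} \to \cat{C}'$ into a morphism $(\fun{K}_{\fun{F}},\ \theta_{\fun{F}})$ of the lax slice, consisting of a functor $\fun{K}_{\fun{F}}$ between the indexing categories and a natural transformation $\theta_{\fun{F}}$ filling the resulting triangle. For the functor I would take the identity on past extensions and postcomposition with $\fun{F}$ on diagrams,
\[
    \fun{K}_{\fun{F}}\colon \opp{\pastext{\cat{A}}} \times \cat{C}^{\cat{A}} \longrightarrow \opp{\pastext{\cat{A}}} \times (\cat{C}')^{\cat{A}}, \qquad (\imath, \fun{G}) \longmapsto (\imath,\ \fun{G} \Cp \fun{F}),
\]
which is plainly functorial.

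For $\theta_{\fun{F}}\colon \extfun{\cat{A}}{\cat{C}} \Rightarrow \fun{K}_{\fun{F}} \Cp \extfun{\cat{A}}{\cat{C}'}$ I would take the component at $(\imath\colon \cat{A}\incl\cat{B},\ \fun{G})$ to be the whiskering functor
\[
    \extfun{\cat{A}}{\cat{C}}(\imath, \fun{G}) \longrightarrow \extfun{\cat{A}}{\cat{C}'}(\imath,\ \fun{G} \Cp \fun{F}), \qquad \tilde{\fun{G}} \longmapsto \tilde{\fun{G}} \Cp \fun{F},
\]
acting on a transition natural transformation $\tau$ by postcomposition with $\fun{F}$. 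This lands in the right category: if $\imath \Cp \tilde{\fun{G}} = \fun{G}$ then $\imath \Cp (\tilde{\fun{G}} \Cp \fun{F}) = \fun{G} \Cp \fun{F}$, and if $\tau$ restricts along $\imath$ to $\idd{\fun{G}}$ then its postcomposition with $\fun{F}$ restricts to $\idd{\fun{G}\Cp\fun{F}}$ because $\fun{F}$ preserves identities; functoriality of whiskering is standard.

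The main work is naturality of $\theta_{\fun{F}}$ in $(\imath, \fun{G})$. Since $\extfun{\cat{A}}{\cat{C}}$ was defined on a general morphism of $\opp{\pastext{\cat{A}}} \times \cat{C}^{\cat{A}}$ as the commuting composite $\fun{L}^* \Cp \alpha_* = \alpha_* \Cp \fun{L}^*$ of a reindexing $\fun{L}^*$ along a morphism of $\pastext{\cat{A}}$ and an action $\alpha_*$ of a natural transformation of $\cat{C}^{\cat{A}}$, it is enough to check the naturality square in these two cases. For $\fun{L}^*$ the square commutes because reindexing is precomposition with $\fun{L}$ while whiskering is postcomposition with $\fun{F}$, and the two commute by associativity of functor composition. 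For $\alpha_*$, with $\alpha\colon \fun{G} \Rightarrow \fun{G}'$, I would run the same three-way case analysis on the value of $\indic{A}$ used to define $\alpha_*$ in the previous proof: for a morphism $f$ of $\cat{B}$ one verifies directly that $\fun{F}\bigl((\alpha_*\tilde{\fun{G}})(f)\bigr) = \bigl((\alpha\Cp\fun{F})_*(\tilde{\fun{G}}\Cp\fun{F})\bigr)(f)$, the only non-immediate case being $\indic{A}(f) = a$, where both sides reduce to $\fun{F}(\tilde{\fun{G}}(f))\Cp\fun{F}(\alpha_{y'})$ by functoriality of $\fun{F}$; the matching identity for the action on transition natural transformations is equally routine. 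This step is the only real obstacle, but it is bookkeeping with explicit formulas rather than a conceptual difficulty.

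Finally, I would check that $\fun{F} \mapsto (\fun{K}_{\fun{F}}, \theta_{\fun{F}})$ respects identities and composition, so that it defines the desired functor $\extfun{\cat{A}}{}\colon \catcat \to \laxslice{\lcatcat}{\catcat}$. The identity functor of $\cat{C}$ goes to the identity morphism of the lax slice, since $\fun{K}_{\idd{\cat{C}}}$ is an identity and every component of $\theta_{\idd{\cat{C}}}$ is an identity functor. For $\fun{F}\colon \cat{C} \to \cat{C}'$ and $\fun{F}'\colon \cat{C}' \to \cat{C}''$ one has $\fun{K}_{\fun{F}} \Cp \fun{K}_{\fun{F}'} = \fun{K}_{\fun{F} \Cp \fun{F}'}$ because postcomposition is functorial in the postcomposed functor, and the vertical composite of $\theta_{\fun{F}}$ with the $\fun{K}_{\fun{F}}$-whiskering of $\theta_{\fun{F}'}$ equals $\theta_{\fun{F} \Cp \fun{F}'}$, since on each component both send $\tilde{\fun{G}}$ to $\tilde{\fun{G}} \Cp \fun{F} \Cp \fun{F}'$; these are exactly the composites prescribed by the composition law of $\laxslice{\lcatcat}{\catcat}$, which concludes the construction.
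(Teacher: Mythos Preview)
Your proposal is correct and follows exactly the same approach as the paper: the morphism assigned to a functor $\fun{F}\colon \cat{C}\to\cat{C}'$ is $\idd{}\times\fun{F}_*$ on the indexing category together with the natural transformation whose components are post-composition with $\fun{F}$. The paper's proof is considerably terser---it simply states the construction and declares the checks ``straightforward''---so your explicit verification of naturality against both $\fun{L}^*$ and $\alpha_*$, and of the identity and composition laws, fills in details the paper leaves implicit.
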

    \begin{proof}
    Given a functor $\fun{P}\colon \cat{C} \to \cat{D}$, post-composition with $\fun{P}$ defines a functor $\fun{P}_*\colon \cat{C}^\cat{A} \to \cat{D}^\cat{A}$.
    Then there is a natural transformation
    \begin{equation} \label{eq:naturality_extfun}
    \begin{tikzcd}
	{\opp{\pastext{\cat{A}}} \times \cat{C}^\cat{A}} &&& \catcat \\
	\\
	{\opp{\pastext{\cat{A}}} \times \cat{D}^\cat{A}}
	\arrow["{\idd{} \times \fun{P}_*}"', from=1-1, to=3-1]
	\arrow["{\extfun{\cat{A}}{\cat{D}}}"', from=3-1, to=1-4]
	\arrow[""{name=0, anchor=center, inner sep=0}, "{\extfun{\cat{A}}{\cat{C}}}", from=1-1, to=1-4]
	\arrow["{\extfun{\cat{A}}{\fun{P}}}"', shorten <=17pt, shorten >=26pt, Rightarrow, from=0, to=3-1]
    \end{tikzcd}
    \end{equation}
    defined as follows: given a past extension $\imath\colon \cat{A} \incl \cat{B}$ and a functor $\fun{F}\colon \cat{A} \to \cat{C}$, the functor
    \begin{equation*}
        \extfun{\cat{A}}{\fun{P}}(\imath, \fun{F})\colon \extfun{\cat{A}}{\cat{C}}(\imath, \fun{F}) \to \extfun{\cat{A}}{\cat{D}}(\imath, \fun{F}\Cp \fun{P})
    \end{equation*}
    acts both on objects and on morphisms by post-composition with $\fun{P}$.
    It is straightforward to check that the assignment $\fun{P} \mapsto \extfun{\cat{A}}{\fun{P}}$ respects identities and composition in $\catcat$.
    \end{proof}

    \begin{remark}[General functoriality pattern] \label{rmk: functoriality pattern}
    A fixed morphism $\fun{K}$ in $\pastext{\cat{A}}$ is classified by a functor $\wkarr \to \pastext{\cat{A}}$.
    Evaluating $\extfun{\cat{A}}{\cat{C}}$ at $\fun{K}$ thus determines a functor
    \begin{equation*}
        \extfun{\cat{A}}{\cat{C}}(\fun{K}, -)\colon \wkarr \times \cat{C}^\cat{A} \to \catcat,
    \end{equation*}
    which we can curry to obtain a functor
    \begin{equation} \label{eq:general_pattern_cat}
        \Lambda.\extfun{\cat{A}}{\cat{C}}(\fun{K}, -)\colon \cat{C}^\cat{A} \to \catcat^\wkarr.
    \end{equation}
    Given a functor $\fun{P}\colon \cat{C} \to \cat{D}$, we can also ``curry the natural transformation'' in (\ref{eq:naturality_extfun}) to obtain a diagram
    \begin{equation} \label{eq:general_functor_covariance}
    \begin{tikzcd}[sep=large]
	{\cat{C}^\cat{A}} &&& \catcat^\wkarr \\
	\\
	{\cat{D}^\cat{A}}
	\arrow["{\fun{P}_*}"', from=1-1, to=3-1]
	\arrow["{\Lambda.\extfun{\cat{A}}{\cat{D}}(\fun{K},-)}"', from=3-1, to=1-4]
	\arrow[""{name=0, anchor=center, inner sep=0}, "{\Lambda.\extfun{\cat{A}}{\cat{C}}(\fun{K},-)}", from=1-1, to=1-4]
	\arrow["{\Lambda.\extfun{\cat{A}}{\fun{P}}(\fun{K},-)}"{description}, shorten <=17pt, shorten >=26pt, Rightarrow, from=0, to=3-1]
    \end{tikzcd}
    \end{equation}
    which is part of a functor $\catcat \to \laxslice{\lcatcat}{\catcat^\wkarr}$.

    Post-composing with the functor $\catcat^\wkarr \to \pointed{\catpos}$ from (\ref{eq: quotient and posref}) we obtain a covariant family of functors $\cat{C}^\cat{A} \to \pointed{\catpos}$.

    We will show that, for suitable choices of $\cat{A}$ and $\fun{K}$, the image of these functors is included in the subcategory of $\pointed{\catpos}$ on the zeroth and first homotopy posets of $\cat{C}$ or categories associated with $\cat{C}$, exhibiting various kinds of functorial dependence of homotopy posets.
    \end{remark}
    
    \begingroup
    \def\theproposition{\ref{prop: Homotopy posets are functorial}}
    \begin{proposition}[Functoriality of the homotopy posets]
        Let $\cat{C}$ be a category, $i \in \{0, 1\}$.
        Then:
        \begin{enumerate}
            \item the assignment $x \mapsto \dhom{i}{\cat{C}}{x}$ extends to a functor
                $\dhom{i}{\cat{C}}{-}\colon \cat{C} \to \pointed{\catpos}$;
            \item a functor $\fun{F}\colon \cat{C} \to \cat{D}$ induces a natural transformation 
                $\pi_i(\fun{F})\colon \dhom{i}{\cat{C}}{-} \Rightarrow \dhom{i}{\cat{D}}{\fun{F}-}.$
        \end{enumerate}
        Given another functor $\fun{G}\colon \cat{D} \to \cat{E}$,  this assignment satisfies
        \begin{equation*}
            \pi_i(\fun{F}\Cp \fun{G}) = \pi_i(\fun{F}) \Cp \pi_i(\fun{G}), \quad \quad \pi_i(\idd{C}) = \idd{\dhom{i}{\cat{C}}{-}}.
        \end{equation*}
    \end{proposition}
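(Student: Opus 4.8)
The plan is to realise each functor $\dhom{i}{\cat{C}}{-}$, $i \in \{0,1\}$, as an instance of the general construction of \autoref{rmk: functoriality pattern}, applied with $\cat{A} \eqdef \Term$ the terminal category --- so that $\cat{C}^{\Term} \cong \cat{C}$ and a functor $\Term \to \cat{C}$ is the same as an object $x$ --- together with a suitably chosen morphism $\fun{K}$ of $\pastext{\Term}$; items (1) and (2) will then follow by reading off the covariance already proved in the Appendix. The key observation is that $\cat{C}$, $\slice{\cat{C}}{x}$ and $\pararr{\cat{C}}{x}$ all arise, naturally in $x$, as extension categories $\extfun{\Term}{\cat{C}}(\imath, \fun{F}_x)$ for past extensions $\imath\colon \Term \incl \cat{B}$ in which $\cat{B}$ is the collage of a profunctor $\opp{\Term} \times \Term \to \catset$ (cf.\ \autoref{rmk:past extension collage}): the empty profunctor gives $\cat{B}$ the discrete two-object category with $\extfun{\Term}{\cat{C}}(\imath, \fun{F}_x) \cong \cat{C}$; the one-point profunctor gives $\cat{B} = \wkarr$ with $\extfun{\Term}{\cat{C}}(\imath, \fun{F}_x) \cong \slice{\cat{C}}{x}$; and the two-point profunctor gives $\cat{B} = \cat{P}$, the free category on $0 \rightrightarrows 1$, with $\extfun{\Term}{\cat{C}}(\imath, \fun{F}_x) \cong \pararr{\cat{C}}{x}$ --- in each case $\imath$ includes $\Term$ as the object that the witnessing functor $\cat{B} \to \wkarr$ sends to $1$. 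Verifying these identifications is a direct unwinding of the definitions; their naturality in $x$ is exactly the content of the $\alpha_*$-clause in the construction of $\extfun{\Term}{\cat{C}}$, which on these $\cat{B}$ reproduces postcomposition in $\slice{\cat{C}}{-}$ and in $\pararr{\cat{C}}{-}$.

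For $i = 0$, let $\imath$ be the past extension realising $\cat{C}$ and $j$ the one realising $\slice{\cat{C}}{x}$, and let $\fun{K}\colon \imath \to j$ be given by the functor from the discrete two-object category to $\wkarr$ that sends the object in the image of $\Term$ to $1$ and the other object to $0$; then $j = \imath \Cp \fun{K}$, and $\fun{K}^* = \extfun{\Term}{\cat{C}}(\fun{K}, \fun{F}_x)$, being precomposition with $\fun{K}$, is exactly the slice projection $\mathrm{dom}\colon \slice{\cat{C}}{x} \to \cat{C}$ under the identifications of the previous paragraph. Hence the curried functor $\Lambda.\extfun{\Term}{\cat{C}}(\fun{K}, -)\colon \cat{C} \to \catcat^\wkarr$ of \autoref{rmk: functoriality pattern} sends $x$ to $(\mathrm{dom}\colon \slice{\cat{C}}{x} \to \cat{C})$, and post-composing with the functor $\catcat^\wkarr \to \pointed{\catpos}$ from \autoref{eq: quotient and posref} yields precisely the assignment $x \mapsto (\dhom{0}{\cat{C}}{x}, [x])$ of \autoref{def: 0th-directed homotopy poset}. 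For $i = 1$ the same scheme applies once one observes that $\slice{(\pararr{\cat{C}}{x})}{(\idd{x}, \idd{x})} \cong \slice{\cat{C}}{x}$ --- a morphism $(f_0, f_1\colon y \to x) \to (\idd{x}, \idd{x})$ in $\pararr{\cat{C}}{x}$ is a map $h\colon y \to x$ forcing $f_0 = f_1 = h$, so the slice collapses to $\slice{\cat{C}}{x}$ and its projection to $\pararr{\cat{C}}{x}$ becomes the functor $(h\colon y \to x) \mapsto (h, h)$; this functor is $\fun{K}^*$ for $\fun{K}\colon \cat{P} \to \wkarr$ the functor collapsing the two parallel arrows to one (again with $j = \imath \Cp \fun{K}$, where now $\imath$ realises $\pararr{\cat{C}}{x}$). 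Post-composing with $\catcat^\wkarr \to \pointed{\catpos}$ then yields $x \mapsto \dhom{0}{\pararr{\cat{C}}{x}}{(\idd{x}, \idd{x})} = \dhom{1}{\cat{C}}{x}$ by \autoref{def: 1st-directed homotopy poset}. This proves item (1).

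Item (2), together with the stated compatibility with composition and identities, is then immediate from the covariance of $\extfun{\Term}{-}$. By the diagram \autoref{eq:general_functor_covariance} --- which, as noted in \autoref{rmk: functoriality pattern}, is part of a functor $\catcat \to \laxslice{\lcatcat}{\catcat^\wkarr}$ --- a functor $\fun{F}\colon \cat{C} \to \cat{D}$ induces, for our fixed $\fun{K}$ and $\cat{A} = \Term$, a triangle commuting up to a natural transformation whose vertical leg $\fun{F}_*$ is, under $\cat{C}^{\Term} \cong \cat{C}$, just $\fun{F}$ itself; post-composing with $\catcat^\wkarr \to \pointed{\catpos}$ turns the mediating transformation into $\pi_i(\fun{F})\colon \dhom{i}{\cat{C}}{-} \Rightarrow \dhom{i}{\cat{D}}{\fun{F}-}$, and functoriality of $\fun{F} \mapsto \extfun{\Term}{\fun{F}}$ gives $\pi_i(\fun{F} \Cp \fun{G}) = \pi_i(\fun{F}) \Cp \pi_i(\fun{G})$ and $\pi_i(\idd{C}) = \idd{\dhom{i}{\cat{C}}{-}}$.

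The step I expect to be the main obstacle is the first paragraph: settling on the three past extensions and the two morphisms $\fun{K}$, checking that each $\imath$ really satisfies the pullback condition of \autoref{dfn:past_extension} (equivalently, is a collage), and --- most delicately --- verifying that the identifications $\extfun{\Term}{\cat{C}}(\imath, \fun{F}_x) \cong \cat{C}$, $\slice{\cat{C}}{x}$, $\pararr{\cat{C}}{x}$ are natural in $x$, so that $\Lambda.\extfun{\Term}{\cat{C}}(\fun{K}, -)$ genuinely agrees, as a functor $\cat{C} \to \catcat^\wkarr$, with the assignment $x \mapsto \mathrm{dom}$ from which the homotopy poset is extracted. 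Everything downstream of that is a formal consequence of results already proved in the Appendix.
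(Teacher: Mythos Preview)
Your proposal is correct and follows essentially the same approach as the paper's own proof: both instantiate the general pattern of \autoref{rmk: functoriality pattern} with $\cat{A} = \Term$, use the same three past extensions $\Term \incl \Term + \Term$, $\Term \incl \wkarr$, $\Term \incl \cat{P}$ (the paper writes $\mathrm{Par}$ for your $\cat{P}$) and the same two morphisms $\fun{K}$ between them, and then identify the resulting extension categories with $\cat{C}$, $\slice{\cat{C}}{x}$, $\pararr{\cat{C}}{x}$ to recover the defining $\mathrm{dom}$ functors. Your remark that naturality in $x$ is precisely the $\alpha_*$-clause acting as postcomposition, and your reduction of the $i=1$ case via $\slice{(\pararr{\cat{C}}{x})}{(\idd{x},\idd{x})} \cong \slice{\cat{C}}{x}$, match the paper's argument exactly.
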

    \addtocounter{proposition}{-1}
    \endgroup
    \begin{proof}
    We will derive the results for both $i \in \{0, 1\}$ from the general functoriality pattern of \autoref{rmk: functoriality pattern}.
    
    First we consider the case $i = 0$.
    Let $\Term$ be the terminal category.
    The inclusion $\fun{K_0}$ of the endpoints of the walking arrow induces a morphism in $\pastext{\Term}$, depicted as follows:
    \begin{equation*}
    \begin{tikzcd}[sep=small]
	{\blue{\bullet}} & {} & {\red{\bullet}} &&& {\blue{\bullet}} & {} & {\red{\bullet}}
	\arrow[from=1-6, to=1-8]
	\arrow[color={rgb,255:red,102;green,102;blue,102}, curve={height=-24pt}, shorten <=15pt, shorten >=15pt, dashed, hook, from=1-2, to=1-7]
    \end{tikzcd} \quad \quad
    \begin{tikzcd}[sep=scriptsize]
	& \Term \\
	\\
	{\Term+\Term} && \wkarr
	\arrow["{\fun{K_0}}"', hook, from=3-1, to=3-3]
	\arrow["{\imath_1}"', hook, from=1-2, to=3-1]
	\arrow["1", hook, from=1-2, to=3-3]
    \end{tikzcd}\end{equation*}
    We claim that, up to isomorphism of categories,
    \begin{equation*}
        \Lambda.\extfun{\Term}{\cat{C}}(\fun{K_0}, -)\colon \cat{C}^1 \to \catcat^\wkarr
    \end{equation*}
    sends an object $x$ of $\cat{C}^\Term$ --- which is, equivalently, an object of $\cat{C}$ --- to the slice projection functor
    \begin{equation*}
        \mathrm{dom}\colon \slice{\cat{C}}{x} \to \cat{C}.
    \end{equation*}
    The domain of $\Lambda.\extfun{\Term}{\cat{C}}(\fun{K_0}, x)$ is the category $\extfun{\Term}{\cat{C}}(1, x)$ whose
    \begin{itemize}
        \item objects are functors $f\colon \wkarr \to \cat{C}$ such that 
    \[\begin{tikzcd}[sep=scriptsize]
	   {\Term} && {\cat{C}} \\
	   \\
	   {\wkarr}
	   \arrow["1", hook, from=1-1, to=3-1]
	   \arrow["x", from=1-1, to=1-3]
	   \arrow["f"', from=3-1, to=1-3]
    \end{tikzcd}\]
        commutes, which are in bijection with morphisms $f$ of $\cat{C}$ whose codomain is $x$, and
        \item morphisms from $f$ to $g$ are natural transformations $h\colon f \Rightarrow g$ --- which are in bijection with commutative squares
        \[\begin{tikzcd}[sep=scriptsize]
	   y && z \\
	   \\
	   x && x
	   \arrow["f"', from=1-1, to=3-1]
	   \arrow["{h_1}"', from=3-1, to=3-3]
	   \arrow["{h_0}", from=1-1, to=1-3]
	   \arrow["g", from=1-3, to=3-3]
        \end{tikzcd}\]
        in $\cat{C}$ --- that restrict to the identity along $1\colon \Term \incl \wkarr$, that is, are such that $h_1 = \idd{x}$.
        These are in bijection with factorisations of $f$ through $g$.
    \end{itemize}
    This establishes an isomorphism between $\extfun{\Term}{\cat{C}}(1, x)$ and $\slice{\cat{C}}{x}$.
    The codomain of $\Lambda.\extfun{\Term}{\cat{C}}(\fun{K_0}, x)$ is the category $\extfun{\Term}{\cat{C}}(\imath_1, x)$ whose
    \begin{itemize}
        \item objects are functors $(y_0, y_1)\colon \Term + \Term \to \cat{C}$ such that 
    \[\begin{tikzcd}[sep=scriptsize]
	   {\Term} && {\cat{C}} \\
	   \\
	   {\Term + \Term}
	   \arrow["{\imath_1}", hook, from=1-1, to=3-1]
	   \arrow["x", from=1-1, to=1-3]
	   \arrow["{(y_0, y_1)}"', from=3-1, to=1-3]
    \end{tikzcd}\]
        commutes, which are in bijection with pairs of objects $(y_0, y_1)$ of $\cat{C}$ such that $y_1 = x$, which are in bijection with objects of $\cat{C}$, and
        \item morphisms from $(y, x)$ to $(z, x)$ are in bijection with pairs of morphisms
    \[\begin{tikzcd}[sep=scriptsize]
	y && z \\
	x && x
	\arrow["{h_1}"', from=2-1, to=2-3]
	\arrow["{h_0}", from=1-1, to=1-3]
    \end{tikzcd}\]
        in $\cat{C}$ that restrict to the identity along $\imath_1$, that is, are such that $h_1 = \idd{x}$.
        These are in bijection with morphisms $y \to z$.
    \end{itemize}
    This establishes an isomorphism between $\extfun{\Term}{\cat{C}}(\imath_1, x)$ and $\cat{C}$.
    The functor $\extfun{\Term}{\cat{C}}(\fun{K_0}, x)$ acts by restriction of $f\colon \wkarr \to \cat{C}$ along $\fun{K_0}\colon \Term+\Term \incl \wkarr$; through the isomorphisms, this acts by mapping $f\colon y \to x$ to its domain $y$.
    This is, by inspection, the same as the action of $\mathrm{dom}$.

    We define
    \begin{equation*}
        \dhom{0}{\cat{C}}{-}\colon \cat{C} \to \pointed{\catpos}
    \end{equation*}
    to be the post-composition of $\Lambda.\extfun{\Term}{\cat{C}}(\fun{K_0}, -)$ with the functor of \autoref{eq: quotient and posref}.
    It follows from our argument that, up to isomorphism, this sends $x$ to the homotopy poset $\dhom{0}{\cat{C}}{x}$.
    The covariance in $\cat{C}$ then follows as an instance of \autoref{eq:general_functor_covariance}: given a functor $\fun{F}\colon \cat{C} \to \cat{D}$, we whisker the natural transformation $\Lambda.\extfun{\Term}{\fun{F}}(\fun{K_0}, -)$ with the functor of (\ref{eq: quotient and posref}) to obtain $\pi_0(\fun{F})\colon \dhom{0}{\cat{C}}{-} \Rightarrow \dhom{0}{\cat{D}}{\fun{F}-}$.

    Now, let us focus on the first homotopy poset.
    The functor $\fun{K_1}$ identifying two parallel arrows also induces a morphism in $\pastext{\Term}$, depicted as follows:
    \begin{equation*}
    \begin{tikzcd}[sep=small]
	{\blue{\bullet}} & {} & {\red{\bullet}} &&& {\blue{\bullet}} & {} & {\red{\bullet}}
	\arrow[curve={height=-6pt}, from=1-1, to=1-3]
	\arrow[curve={height=6pt}, from=1-1, to=1-3]
	\arrow[from=1-6, to=1-8]
	\arrow[color={rgb,255:red,102;green,102;blue,102}, curve={height=-24pt}, shorten <=15pt, shorten >=15pt, dashed, two heads, from=1-2, to=1-7]
    \end{tikzcd}
    \quad \quad
    \begin{tikzcd}[sep=scriptsize]
	& \Term \\
	\\
	  \mathrm{Par} && \wkarr
	\arrow["{\fun{K_1}}"', hook, from=3-1, to=3-3]
	\arrow["c"', hook, from=1-2, to=3-1]
	\arrow["1", hook, from=1-2, to=3-3]
    \end{tikzcd}
    \end{equation*}
    Here, $\mathrm{Par}$ denotes the ``walking parallel pair of arrows''.
    We claim that, up to isomorphism of categories,
    \begin{equation*}
        \Lambda.\extfun{\Term}{\cat{C}}(\fun{K_1}, -)\colon \cat{C} \to \catcat^\wkarr
    \end{equation*}
    sends an object $x$ of $\cat{C}$ to the slice projection functor
    \begin{equation*}
        \mathrm{dom}\colon \slice{\pararr{\cat{C}}{x}}{(\idd{x}, \idd{x})} \to \pararr{\cat{C}}{x}.
    \end{equation*}
    We have already established that the domain of $\Lambda.\extfun{\Term}{\cat{C}}(\fun{K_1}, x)$, which is the category $\extfun{\Term}{\cat{C}}(1, x)$, is isomorphic to $\slice{\cat{C}}{x}$, which can be shown to be isomorphic to $\slice{\pararr{\cat{C}}{x}}{(\idd{x}, \idd{x})}$ using Proposition \ref{prop: subterminal as weak terminal parallel arrow}.

    The codomain of $\Lambda.\extfun{\Term}{\cat{C}}(\fun{K_1}, x)$ is the category $\extfun{\Term}{\cat{C}}(c, x)$ whose
    \begin{itemize}
        \item objects are functors $(f_0, f_1)\colon \mathrm{Par} \to \cat{C}$ such that 
    \[\begin{tikzcd}[sep=scriptsize]
	{\Term} && {\cat{C}} \\
	\\
	{{\mathrm{Par}}}
	\arrow["c", hook, from=1-1, to=3-1]
	\arrow["x", from=1-1, to=1-3]
	\arrow["{(f_0, f_1)}"', from=3-1, to=1-3]
    \end{tikzcd}\]
    commutes, which are in bijection with pairs of morphisms $(f_0, f_1)$ of $\cat{C}$ whose codomain is $x$, and
    \item morphisms from the pair $(f_0, f_1)$ to $(g_0, g_1)$ are natural transformations $h\colon (f_0, f_1) \Rightarrow (g_0, g_1)$ that restrict to the identity along $c$, which are in bijection with morphisms $h$ such that $f_0 = h;g_0$ and $f_1 = h;g_1$.
    \end{itemize}
    This establishes an isomorphism between $\extfun{\Term}{\cat{C}}(c, x)$ and $\pararr{\cat{C}}{x}$.

    The functor $\extfun{\Term}{\cat{C}}(\fun{K_1}, x)$ acts by precomposing $f\colon \wkarr \to \cat{C}$ with $\fun{K_1}\colon \mathrm{Par} \to \wkarr$, which through the isomorphisms sends a pair $(f, f)$ with its unique morphism to $(\idd{x}, \idd{x})$ to the pair $(f, f)$ on its own.
    This is, by inspection, the same as the action of $\mathrm{dom}$.

    We define
    \begin{equation*}
        \dhom{1}{\cat{C}}{-}\colon \cat{C} \to \pointed{\catpos}
    \end{equation*}
    to be the post-composition of $\Lambda.\extfun{\Term}{\cat{C}}(\fun{K_1}, -)$ with the functor of \autoref{eq: quotient and posref}.
    It follows from our argument that, up to isomorphism, this sends $x$ to the homotopy poset $\dhom{1}{\cat{C}}{x}$.
    Again, we obtain covariance in $\cat{C}$ by whiskering instances of \autoref{eq:general_functor_covariance}.
    This completes the proof.
    \end{proof}

\end{document}